\renewcommand*\l@section{\@dottedtocline{1}{1.5em}{2.3em}}
\theoremstyle{plain}
\newtheorem{theorem}{Theorem}
\newtheorem{proposition}[theorem]{Proposition}
\newtheorem{lemma}[theorem]{Lemma}
\newtheorem{example}[theorem]{Example}
\newtheorem{corollary}[theorem]{Corollary}
\theoremstyle{definition}
\newtheoremstyle{myrem}
 {3pt}
 {3pt}
 {\normalsize}
 { }
 {\itshape}
 {:}
 { }
 {}
 \theoremstyle{myrem}
 \newtheorem{remark}{Remark}
 \appto\remark{\leftskip\parindent}
 \appto\remark{\rightskip\parindent}
\numberwithin{equation}{section}
\numberwithin{theorem}{section}
\begin{document}

\begin{center}
{\Large {\textbf {Weighted Simplicial Complexes and Weighted Analytic Torsions 
}}}
 \vspace{0.58cm}\\

Shiquan Ren*,     Chengyuan Wu*

\footnotetext[1] { *first authors.  }

\bigskip

\begin{quote}
\begin{abstract}
A weighted simplicial complex  is a simplicial complex with values (called weights) on the vertices.  In this paper,  we consider weighted simplicial complexes with $\mathbb{R}^2$-valued weights.  We study the weighted homology and the  weighted analytic torsion for such weighted simplicial complexes. 
\end{abstract}
\end{quote}

\end{center}

\section{Introduction}

 During the 20-th century, 
the theory of R-torsion and  analytic torsion of Riemannian manifolds was developed by K. Reidemeister \cite{rei}, John Milnor \cite{milnor},  D. B. Ray and I. Singer \cite{ray}, Werner M\"{u}ller \cite{muller},  J. Cheeger \cite{ch},   etc.   
Recently, Alexander Grigor'yan,  Yong  Lin  and  Shing-Tung Yau \cite{lin1}   applied the R-torsion and the analytic torsion theory to digraphs and path complexes (cf. \cite{lin2,lin3,lin4,lin5,lin6}).  A discrete version of the R-torsion and the analytic torsion theory on digraphs has been given in \cite{lin1}.

Simplicial complexes are useful  combinatorial models in  algebraic topology.  In recent decades, (abstract) simplicial complexes have been found to have various applications in data sciences.  Let $V$ be a finite set with a total order $\prec$.   An {\it (abstract)  simplicial complex}  $\mathcal{K}$ on $V$ is a subset of the power set $2^V$ such that for any $\sigma\in\mathcal{K}$  and any non-empty subset $\tau\subseteq\sigma$,   we have that  $\tau\in\mathcal{K}$.  An element $\sigma\in\mathcal{K}$  is called a {\it simplex},  and an element $v\in V$  is called a {\it vertex}.  For any simplex $\sigma\in\mathcal{K}$,  we can always write $\sigma$  as $\{v_0,v_1,\ldots, v_n\}$  for some non-negative integer $n$,  where $v_0,v_1,\ldots, v_n\in V$  and $v_0\prec v_1\prec\ldots  \prec v_n$.  We write $n=\dim\sigma$ and call it the {\it dimension}  of $\sigma$.  Letting $\sigma$ run over all the simplices in $\mathcal{K}$, the maximum of $\dim \sigma$ is denoted as $\dim\mathcal{K}$ and is called the dimension of $\mathcal{K}$.

 A weighted simplicial complex  is a simplicial complex with some functions that  assign values to the vertices (or to  the simplices).  In 1990,  Robert. J. MacG.  Dawson \cite{1990}  studied the (weighted) homology for  weighted simplicial complexes.  In recent years, the weighted homology of  weighted simplicial complexes  have been further explored  in \cite{chengyuan1,chengyuan11}.  Some other topological features for weighted simplicial complexes, such as the fundamental groups,  the discrete Morse functions,  the  Hodge-Laplace operators, etc. have been studied in \cite{chengyuan2,chengyuan3, chengyuan5}.  In 2020,  Zhenyu Meng, D Vijay Anand, Yunpeng Lu, Jie Wu  and Kelin Xia \cite{xia1} have found amazing applications of   the weighted homology  for weighted simplicial complexes  in boilogical data analysis, with significant effects.

\smallskip

In this paper,  we consider the  weighted simplicial complexes $(\mathcal{K},f,g)$ where both $f$ and $g$ are real functions on the set $V$ of the vertices.  We use $f$ to twist the boundary operators and use $g$ to give a symmetric and semi-positive definite quadratic form on the chain complexes.  We study the weighted homology  equipped with the symmetric and semi-positive definite quadratic forms.  Then we study the weighted analytic torsions  for weighted simplicial complexes.

For any real number $r$,  we let $\epsilon(r)=1$ if $r\neq 0$ and let $\epsilon(r)=0$ if $r=0$.  Then for any function $f$ on $V$,  $\epsilon(f)$ is a function on $V$  given by $\epsilon(f)(v)=\epsilon(f(v))$ for any $v\in V$.   For any real-valued function $f$ on $V$,  let $\mathcal{K}^\times _f$  be the largest sub-simplicial complex of $\mathcal{K}$  such that $f$ is non-vanishing on all of its vertices.  We will prove the following theorem.  

\begin{theorem}[Main Result]
\label{th-main}
Let $\mathcal{K}$  be a simplicial complex with the set $V$  of its vertices.  Let $(f,g)$ be a $\mathbb{R}^2$-valued function on $V$. Let $T(\mathcal{K},f,g)$ be the $(f,g)$-weighted analytic torsion of $\mathcal{K}$.  Then 
\begin{enumerate}[(i).]
\item
for any non-vanishing  real function $h$ on $V$,  we have 
\begin{eqnarray*}
T(\mathcal{K},\epsilon(g) fh,\epsilon(f) gh)=T(\mathcal{K}, \epsilon(g)f,\epsilon(f)g);
\end{eqnarray*} 
\item
 for any non-zero real constant $c$ we have
\begin{eqnarray*}
T(\mathcal{K}, \epsilon(g)f,c\epsilon(f)g)=|c|^{s(\mathcal{K},\epsilon(f)\epsilon(g))}T(\mathcal{K},\epsilon(g)f,\epsilon(f)g)
\end{eqnarray*}
  and 
\begin{eqnarray*}
T(\mathcal{K},c\epsilon(g)f,\epsilon(f)g)=|c|^{-s(\mathcal{K},\epsilon(f)\epsilon(g))} T(\mathcal{K},\epsilon(g)f,\epsilon(f)g). 
\end{eqnarray*}
Here  
\begin{eqnarray*}
s(\mathcal{K},\epsilon(f)\epsilon(g))=\sum_{n\geq 0}(-1)^n \dim \partial_n C_n(\mathcal{K}^\times_{\epsilon(f)\epsilon(g)};\mathbb{R}).
\end{eqnarray*} 
\end{enumerate}
\end{theorem}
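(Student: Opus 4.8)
The plan is to compute everything on the nondegenerate part of $\mathcal{K}$ and then track how the weighted Hodge Laplacians rescale. Write $\mathcal{K}'=\mathcal{K}^\times_{\epsilon(f)\epsilon(g)}$, the largest subcomplex on whose vertices both $f$ and $g$ are nonzero, and recall that $T(\mathcal{K},f,g)$ is the Ray--Singer torsion assembled from the regularized determinants ${\det}'\Delta_n^{f,g}$ of the $(f,g)$-weighted Hodge Laplacians $\Delta_n^{f,g}=\partial_{n+1}^f(\partial_{n+1}^f)^{\ast}+(\partial_n^f)^{\ast}\partial_n^f$, where $\partial^f$ is the $f$-twisted boundary (in the vertex-weight convention $\partial^f\sigma=\sum_i(-1)^i f(v_i)\,d_i\sigma$) and the adjoints are taken in the inner product attached to $g$. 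The first step is to observe that replacing $(f,g)$ by $(\epsilon(g)f,\epsilon(f)g)$ synchronizes the supports of the twisting and of the metric: both weights now vanish exactly off the vertex set of $\mathcal{K}'$. Hence the weighted chain complex splits off a contribution carried by simplices meeting $V\setminus\mathcal{K}'$, on which either $\partial^{\epsilon(g)f}$ or the $g$-form degenerates, and the torsion reduces to the one computed on $\mathcal{K}'$, where the $g$-inner product is genuinely positive definite. Since $h$ is nonvanishing, $\epsilon(gh)=\epsilon(g)$ and $\epsilon(fh)=\epsilon(f)$, so both sides of (i) reduce to the same $\mathcal{K}'$. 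From here on I work on $\mathcal{K}'$, where $\mathcal{K}'$ is finite, so ${\det}'$ is literally the product of the nonzero eigenvalues.

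For the structural input I would pass to the symmetrized Laplacian $\widetilde{\Delta}_n=G_n^{1/2}\Delta_n^{f,g}G_n^{-1/2}$, where $G_n=\mathrm{diag}\big(\prod_{v\in\sigma}g(v)^2\big)$ is the Gram matrix of the $g$-form; $\widetilde{\Delta}_n$ is symmetric and isospectral to $\Delta_n^{f,g}$. A direct entrywise computation shows that the symmetrized twisted boundary has matrix entry $(-1)^{i}f(v_i)/|g(v_i)|$ on the face obtained by deleting $v_i$, i.e. it depends on the weights only through the single ratio $f/|g|$ at each vertex. This is the key to (i): under $(f,g)\mapsto(fh,gh)$ the ratio changes only by the sign $\mathrm{sgn}(h(v))$, so $\widetilde{\Delta}_n^{fh,gh}=S_n\widetilde{\Delta}_n^{f,g}S_n$ for the orthogonal diagonal involution $S_n=\mathrm{diag}\big(\prod_{v\in\sigma}\mathrm{sgn}(h(v))\big)$. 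An orthogonal conjugation preserves spectra, so every ${\det}'\Delta_n$ is unchanged and therefore $T(\mathcal{K},\epsilon(g)fh,\epsilon(f)gh)=T(\mathcal{K},\epsilon(g)f,\epsilon(f)g)$, which is (i).

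For (ii) the same entrywise description gives the scaling law: replacing $g$ by $cg$ multiplies every symmetrized boundary entry by $|c|^{-1}$, hence $\widetilde{\Delta}_n\mapsto c^{-2}\widetilde{\Delta}_n$ uniformly in $n$, while replacing $f$ by $cf$ gives $\widetilde{\Delta}_n\mapsto c^{2}\widetilde{\Delta}_n$. Consequently ${\det}'\Delta_n$ is multiplied by $|c|^{\mp 2\rho_n}$, where $\rho_n=\operatorname{rank}\Delta_n^{f,g}$ is the number of nonzero eigenvalues. By the Hodge decomposition on $\mathcal{K}'$ one has $\rho_n=\dim\partial_{n+1}C_{n+1}+\dim\partial_n C_n=r_{n+1}+r_n$ with $r_n=\dim\partial_n C_n(\mathcal{K}';\mathbb{R})$. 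Feeding this into the defining formula $\log T=-\tfrac12\sum_{n\geq 0}(-1)^n n\log{\det}'\Delta_n$ and using the reindexing identity
\begin{eqnarray*}
\sum_{n\geq 0}(-1)^n n\,(r_{n+1}+r_n)=\sum_{n\geq 0}(-1)^n r_n=s(\mathcal{K},\epsilon(f)\epsilon(g))
\end{eqnarray*}
the factor of $2$ cancels the $\tfrac12$ and the weighted sum collapses to $s(\mathcal{K},\epsilon(f)\epsilon(g))$, giving exactly $T\mapsto|c|^{\pm s(\mathcal{K},\epsilon(f)\epsilon(g))}T$ with the sign $+$ for scaling $g$ and $-$ for scaling $f$. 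This is (ii).

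The main obstacle I anticipate is the first step rather than the algebra: making precise that the degenerate simplices contribute nothing to the regularized torsion and that $T$ genuinely descends to $\mathcal{K}'$, so that the factors $\epsilon(g)$ and $\epsilon(f)$ may be inserted without changing $T$. Once the problem is honestly reduced to the positive-definite complex on $\mathcal{K}'$, the isospectrality of $\widetilde{\Delta}_n$ and the scaling computations above are routine; the only quantitative point requiring care is that the $g$-form enters quadratically (through $g(v)^2$), which is exactly what upgrades the naive exponent $s/2$ to $s$ and makes the two statements in (ii) reciprocal.
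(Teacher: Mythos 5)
Your proposal is correct, and it reaches the theorem by a genuinely different route from the paper's. The paper factors the statement into three propositions: part (i) is Proposition~\ref{pr-6.f}, proved via the chain isometry $\varphi$ of Lemma~\ref{le-5.a}, which rescales each simplex coefficient by $\prod_i h(v_i)^{-1}$ and conjugates the Laplacians; the $g$-scaling half of (ii) (Proposition~\ref{pr-3.v}) is obtained by citing the R-torsion scaling result \cite[Corollary~3.8]{lin1}, using $\langle a,b\rangle_{cg}=c^{2(n+1)}\langle a,b\rangle_g$ and an Euler-characteristic manipulation to convert the exponent $\sum(-1)^n(n+1)(\dim C_n-\dim H_n)$ into $s$; the $f$-scaling half (Proposition~\ref{pr-add}) is the direct spectral computation $\Delta_n^{cf,g}=c^2\Delta_n^{f,g}$. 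You instead symmetrize once and for all, $\tilde{\Delta}_n=G_n^{1/2}\Delta_n^{f,g}G_n^{-1/2}$, note that the symmetrized boundary entry at the $i$-th face is $(-1)^i f(v_i)/|g(v_i)|$, and then (i) becomes conjugation by the orthogonal sign involution $S_n$ (your computation is right: in $S_{n-1}B_nS_n$ the shared factors $\mathrm{sgn}\,h(v)$ cancel in pairs, leaving exactly $\mathrm{sgn}\,h(v_i)$), while both halves of (ii) become the uniform scalings $\tilde{\Delta}_n\mapsto c^{\mp 2}\tilde{\Delta}_n$ combined with the rank identity $\rho_n=r_n+r_{n+1}$ and the reindexing $\sum_{n\geq 0}(-1)^n n(r_n+r_{n+1})=\sum_{n\geq 0}(-1)^n r_n$, both of which check out against the paper's exponents. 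What your route buys: it is self-contained (no appeal to \cite[Corollary~3.8]{lin1}), it treats the two scalings in (ii) symmetrically, and it makes the reciprocity of the two exponents structurally transparent, since the symmetrized complex depends on $(f,g)$ only through $f/|g|$ up to sign. What the paper's route buys: the chain isometry $\varphi$ is reused to prove the homology statements (Theorem~\ref{th-isom}, Corollary~\ref{co-isom}), which your purely spectral argument does not yield. Two small points to tighten. First, in $\rho_n=r_n+r_{n+1}$ the ranks arising from your Hodge decomposition are those of the \emph{weighted} boundaries on $\mathcal{K}'$, whereas $s(\mathcal{K},\epsilon(f)\epsilon(g))$ is stated with the unweighted $\partial_n$; these agree because on $\mathcal{K}'$ one has $\partial_n^f=D_{n-1}^{-1}\partial_n D_n$ with $D_n=\mathrm{diag}(f(\sigma))$ invertible, which is precisely what the paper invokes via \cite[Lemma~2.1]{dig1} -- add that one line. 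Second, the reduction you flag as the main anticipated obstacle is in fact definitional in this paper: $T(\mathcal{K},f,g)$ is defined through the reduced complex $C_n(\mathcal{K};\mathbb{R})/N_n(\mathcal{K},g;\mathbb{R})$, which Lemma~\ref{le-2.3y} and Proposition~\ref{pr-zzz} identify isometrically with $C_*(\mathcal{K}^\times_g,f,g;\mathbb{R})$; hence $T(\mathcal{K},\epsilon(g)f,\epsilon(f)g)$ is by construction computed on $\mathcal{K}^\times_{\epsilon(f)g}=\mathcal{K}^\times_{\epsilon(f)\epsilon(g)}$, where $\epsilon(g)f=f$ and $\epsilon(f)g=g$, so no separate splitting argument is needed.
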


As a by-product,  we prove in Theorem~\ref{pr-3.19.g}  that for any  weighted simplicial complex $(\mathcal{K},f,g)$ and  any $n\geq 0$,  there is a symmetric and semi-positive definite quadratic form $\langle~,~\rangle_g$ on the $n$-th weighted homology $H_n(\mathcal{K},f,g;\mathbb{R})$. In addition,  if $g$ is non-vanishing, then  the quadratic form $\langle~,~\rangle$  is an inner product.  We also prove in Theorem~\ref{th-isom}  and Corollary~\ref{co-isom} that for any non-vanishing real function $h$  on $V$,  there is a linear  isometry from $H_n(\mathcal{K},fh,gh;\mathbb{R})$  to $H_n(\mathcal{K},f,g;\mathbb{R})$.  Thus  the $(f,g)$-weighted homology $H_n(\mathcal{K},f,g;\mathbb{R})$ only depends on the ratio function  $f/g$ for any non-vanishing functions $f$ and $g$.   In particular,  if $f=g$, then there is a  linear isometry from the weighted homology $H_n(\mathcal{K},f,f;\mathbb{R})$ to the usual (unweighted) homology $H_n(\mathcal{K};\mathbb{R})$  (cf.  \cite[Section~5]{chengyuan11}).

\smallskip

 The remaining part of this paper is organized as follows.  In Section~\ref{s2},  we give some prelinimaries in linear algebra. In Section~\ref{s3},  we study the weighted homology of weighted simplicial complexes.  We prove Theorem~\ref{th-isom} and Theorem~\ref{pr-3.19.g}.  In Section~\ref{s4},  we study the weighted analytic torsions for  weighted simplicial complexes and  prove Theorem~\ref{th-main}.   

\smallskip

\section{Preliminaries}\label{s2}

Let $W$  be a (finite dimensional) real vector space.  Let $\langle~,~\rangle$  be a symmetric semi-positive definite quadratic form on $W$.  Consider the sub-space 
\begin{eqnarray*}
N=\{v\in W\mid  \langle v,v\rangle=0\}=\{ v\in W\mid \langle v,u\rangle=0{\rm ~for~any~} u\in W\}.
\end{eqnarray*}
Note that the second equality is obtained by using the Cauchy-Schwarz inequality.  We call $N$ the {\bf null sub-space} for  the quadratic form $\langle~,~\rangle$. The next lemma proves that the quotient space of $W$  by $N$  has an inherited inner product.

\begin{lemma}\label{le-1.1}
The quotient space $W/N$ has an inner product $\langle~,~\rangle$ which is inherited from the quadratic form $\langle~,~\rangle$ on $W$. 
\end{lemma}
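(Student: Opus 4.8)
The plan is to define a form on the quotient directly by the representative-wise rule $\langle [u],[v]\rangle := \langle u,v\rangle$, where $[u]$ denotes the coset $u+N$, and then to check in turn that this rule is well-defined, symmetric, bilinear, and positive definite. The first and most essential step is well-definedness. Suppose $[u]=[u']$ and $[v]=[v']$, so that $u'=u+a$ and $v'=v+b$ for some $a,b\in N$. Expanding by bilinearity of $\langle~,~\rangle$ on $W$ gives $\langle u',v'\rangle = \langle u,v\rangle + \langle u,b\rangle + \langle a,v\rangle + \langle a,b\rangle$, and the three extra terms all vanish: by the second characterization of $N$ in the displayed identity, every element of $N$ pairs to zero against every vector of $W$. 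Hence $\langle u',v'\rangle=\langle u,v\rangle$, and the value does not depend on the chosen representatives. This is the step where the two equivalent descriptions of $N$—reconciled through the Cauchy–Schwarz inequality as noted in the excerpt—do the real work, so I would state at the outset that the form descends to cosets before asserting any of its algebraic properties.

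Once well-definedness is in hand, symmetry and bilinearity of $\langle~,~\rangle$ on $W/N$ follow immediately from the corresponding properties on $W$, since the quotient projection $W\to W/N$ is linear and the form is computed on representatives. It then remains to upgrade semi-positive definiteness to genuine positive definiteness. For any coset $[v]$ we have $\langle [v],[v]\rangle=\langle v,v\rangle\geq 0$; and if $\langle [v],[v]\rangle=0$ then $\langle v,v\rangle=0$, which by the first description of $N$ means $v\in N$, i.e. $[v]=0$ in $W/N$. Thus the inherited form is nondegenerate, and a symmetric semi-positive definite form that vanishes only on the zero vector is exactly an inner product.

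The only genuine obstacle is the well-definedness verification, and it is disposed of entirely by the radical description $N=\{v\mid \langle v,u\rangle=0 \text{ for all } u\in W\}$; everything afterward is a routine transfer of algebraic structure through the quotient map. I would therefore organize the write-up so that the logical order is clean: establish that the pairing descends, then record symmetry and bilinearity as inherited, and finally use the first description of $N$ to obtain positive definiteness.
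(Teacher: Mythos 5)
Your proposal is correct and follows essentially the same route as the paper: both define the form representative-wise on cosets, dispose of well-definedness by the fact (via Cauchy--Schwarz) that null vectors pair to zero with everything, and obtain positive definiteness from the first description of $N$. The only cosmetic difference is that the paper re-runs the Cauchy--Schwarz estimate inside the proof, whereas you invoke the radical characterization $N=\{v\mid \langle v,u\rangle =0 \text{ for all } u\in W\}$ already recorded before the lemma.
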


  \begin{proof}
For any $v_1+ N, v_2+ N\in  W/N$,  we define 
\begin{eqnarray}\label{eq-1.2}
\langle v_1+ N, v_2+ N\rangle =\langle v_1,v_2\rangle. 
\end{eqnarray}
To prove (\ref{eq-1.2})  is well-defined,  we let $v'_1=v_1+n_1$ and $v'_2=v_2+ n_2$ where $n_1,n_2\in N$.  By the Cauchy-Schwarz inequality, 
\begin{eqnarray}\label{eq-1.3}
|\langle v_1,n_2\rangle|^2\leq  \langle v_1,v_1\rangle \langle n_2,n_2\rangle. 
\end{eqnarray}
Since $n_2\in N$, $\langle n_2,n_2\rangle=0$.  Hence (\ref{eq-1.3}) implies
$\langle v_1,n_2\rangle=0$. 
Similarly,  we have
$\langle n_1,v_2\rangle=0$ and 
$\langle n_1,n_2\rangle=0$.  Therefore,   we have (\ref{eq-1.2}),  and the inherited quadratic form $\langle~,~\rangle$  is well-defined on $W/N$. 

To prove $\langle~,~\rangle$  is an inner product on $W/N$,  we need to verify that it is strictly positive-definite. Suppose 
$\langle v+N,  v+N\rangle=0$.  Then $\langle v,v\rangle=0$,  which implies $v\in N$.  Therefore, $\langle~,~\rangle$  is strictly positive-definite  on $W/N$,  thus it is an inner product. 
\end{proof}

As a genralization of Lemma~\ref{le-1.1},  the next lemma shows that any quotient space of $W$ inherits a quadratic form. 

\begin{lemma}\label{le-1.q}
For any sub-space $U$ of $W$,  the quotient space $W/U$  inherits a symmetric semi-positive definite quadratic form $\langle~,~\rangle$ from $W$.  Moreover, $\langle~,~\rangle$  is an inner product on $W/U$ if and only if $N\subseteq U$. 
\end{lemma}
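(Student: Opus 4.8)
The plan is to build the inherited form on $W/U$ out of the inner product on $W/N$ furnished by Lemma~\ref{le-1.1}, rather than trying to push the quadratic form on $W$ directly down to $W/U$. The naive assignment $\langle v_1+U,v_2+U\rangle:=\langle v_1,v_2\rangle$ is \emph{not} well defined unless $U\subseteq N$, so the first task is to choose the correct definition. Write $R\colon W\to W/N$, $Rv=v+N$, for the quotient projection, and let $\langle~,~\rangle$ also denote the inner product on the finite-dimensional inner product space $W/N$ coming from Lemma~\ref{le-1.1}. Set $\bar U:=R(U)=(U+N)/N$, a subspace of $W/N$, and let $P\colon W/N\to W/N$ be the orthogonal projection onto the orthogonal complement $\bar U^{\perp}$. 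I would then define, for $v_1,v_2\in W$,
\begin{eqnarray*}
\langle v_1+U,\,v_2+U\rangle:=\langle PRv_1,\,PRv_2\rangle .
\end{eqnarray*}
(Note that when $U=N$ one has $\bar U=0$, so $P$ is the identity and this recovers the inner product of Lemma~\ref{le-1.1}, confirming that the lemma generalizes it.)

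The second step is to verify that this is a well-defined symmetric semi-positive definite quadratic form on $W/U$. For well-definedness, replacing $v_i$ by $v_i+u$ with $u\in U$ changes $Rv_i$ by $Ru\in\bar U=\ker P$, hence leaves $PRv_i$ unchanged; so the right-hand side depends only on the cosets $v_i+U$. Symmetry is inherited from the inner product on $W/N$ together with the self-adjointness of $P$, and semi-positive definiteness is immediate since $\langle v+U,v+U\rangle=\langle PRv,PRv\rangle\geq 0$. Equivalently, this form can be described as the pullback, along the canonical surjection $W/U\twoheadrightarrow W/(U+N)$, of the inner product that $W/(U+N)\cong\bar U^{\perp}$ inherits as a quotient of the inner product space $W/N$; that reformulation makes the well-definedness conceptual rather than computational.

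For the second assertion of the lemma I would compute the null sub-space of the new form. A coset $v+U$ satisfies $\langle v+U,v+U\rangle=0$ if and only if $PRv=0$, i.e.\ $Rv\in\ker P=\bar U=R(U)$, which (since $\ker R=N$) is equivalent to $v\in R^{-1}(R(U))=U+N$. Hence the null sub-space of $\langle~,~\rangle$ on $W/U$ is exactly $(U+N)/U$. A symmetric semi-positive definite form is an inner product precisely when it is strictly positive definite, i.e.\ when its null sub-space is trivial; and $(U+N)/U=0$ holds exactly when $U+N=U$, that is, when $N\subseteq U$. This yields the claimed equivalence. The one point needing care—the main, if modest, obstacle—is recognizing that the definition must route through the orthogonal projection $P$ on $W/N$ (the direct push-forward failing for general $U$), after which the key identity $R^{-1}(R(U))=U+N$ makes the null-space computation, and hence the ``only if''/``if'' dichotomy, routine.
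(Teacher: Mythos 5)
Your proof is correct, and it takes a genuinely different, choice-free route compared with the paper's. The paper works inside $W$ itself: it picks a complement $U_1$ of $U\cap N$ in $U$ (via a non-canonical isomorphism $\iota$), extends it by a Gram--Schmidt-type argument to a complement $W_1$ of $N$ in $W$, decomposes $v=v_0+v_2+n$ with $v_0\in U_1$, $v_2\in\perp_{W_1}U_1$, $n\in N$, defines $\langle v+U,v'+U\rangle=\langle v_2,v'_2\rangle$, and then must verify that the outcome is independent of the two non-canonical choices $U_1$ and $W_1$ (the key identity (\ref{eq-1.20}) there is asserted rather than argued in detail). You instead pass once and for all to the honest inner product space $W/N$ furnished by Lemma~\ref{le-1.1} and use the canonical orthogonal projection $P$ onto $\bar U^{\perp}$ where $\bar U=R(U)$; since $R$, $N$, $\bar U$ and $P$ are all canonical, no independence-of-choices verification is needed, which is what your construction buys. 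The two constructions in fact produce the same form: $R$ restricts to an isometric isomorphism $W_1\to W/N$ carrying $U_1$ to $\bar U$ and $\perp_{W_1}U_1$ to $\bar U^{\perp}$, so $PRv=Rv_2$ and $\langle PRv,PRv'\rangle=\langle v_2,v'_2\rangle$. For the second assertion, your direct computation of the null sub-space as $(U+N)/U$ is a sharper version of the paper's identification $W/U\cong\big(\perp_{W_1}U_1\big)\oplus\big(N/(U\cap N)\big)$ — note $(U+N)/U\cong N/(U\cap N)$ by the second isomorphism theorem — and both correctly reduce strict positivity to $N\subseteq U$. The only implicit hypotheses you use, namely that the orthogonal projection exists and that $\bar U^{\perp\perp}=\bar U$, are covered by the paper's standing assumption that $W$ is finite dimensional.
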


\begin{proof}
Let $U$ be a sub-space of $W$.  Restricted to $U$, the quadratic form $\langle~,~\rangle$ on $W$ gives a quadratic form $\langle~,~\rangle$ on $U$,  which is still symmetric and semi-positive definite. By Lemma~\ref{le-1.1},  $U/(U\cap N)$  inherits an inner product $\langle~,~\rangle$.  Note that as vector spaces,  
\begin{eqnarray}\label{eq-1.11}
U\cong U/(U\cap N) \oplus (U\cap N). 
\end{eqnarray}
 We let $\iota$  be the isomorphism from the right-hand side of (\ref{eq-1.11}) to the left-hand side of (\ref{eq-1.11})  and  let $U_1$ be the image $\iota(U/(U\cap N))$. Then  
\begin{eqnarray}\label{eq-1.12}
U=U_1 \oplus (U\cap N). 
\end{eqnarray}
We point out that the isomorphism $\iota$ may not be unique hence the sub-space $U_1$ may not be unique as well.  Suppose both $\iota$ and $U_1$ are fixed.  For any $u\in U$,  with respect to (\ref{eq-1.12}) we can write $u=u_1+ n$ uniquely where $u_1\in U_1$ and $n\in U\cap N$.  Then for any $u,u'\in U$ we have 
\begin{eqnarray}\label{eq-1.13}
\langle u,u'\rangle=\langle u_1,u'_1\rangle. 
\end{eqnarray}
The left-hand side of (\ref{eq-1.13}) is a semi-positive definite quadratic form and the right-hand-side of (\ref{eq-1.13}) is an inner product.  By a similar argument of the Gram-Schmidt process,  we can extend $U_1$ to be a sub-space $W_1$ of $W$ such that  
\begin{eqnarray}\label{eq-1.16}
W= W_1\oplus N 
\end{eqnarray}
as vector spaces where $W_1$ inherits an inner product $\langle~,~\rangle$.  We consider the orthogonal complement $\perp_{W_1}U_1$ of $U_1$ in $W_1$.  Then 
\begin{eqnarray}\label{eq-1.17}
W_1=(\perp_{W_1}U_1) \oplus U_1 
\end{eqnarray}
as Euclidean spaces. It follows from (\ref{eq-1.16}) and (\ref{eq-1.17}) that for any $v\in W$,  we  can write 
\begin{eqnarray}\label{eq-1.18}
v=v_0+ v_2 +n
\end{eqnarray}
where $v_0\in U_1$,  $v_2\in \perp_{W_1}U_1$ and $n\in N$.  Moreover,  once $W_1$ is fixed, the expression (\ref{eq-1.18}) is unique.  
With the help of (\ref{eq-1.18}) we define the quadratic form on $W/U$  by setting 
\begin{eqnarray}\label{eq-1.19}
\langle v+ U,v'+U\rangle =\langle v_2,v'_2\rangle  
\end{eqnarray}
for any $v,v'\in W$.  Note that the right-hand side of (\ref{eq-1.19}) does not depend on the representatives $v$  of $v+U$ and $v'$ of $v'+U$.   Hence (\ref{eq-1.19}) gives a well-defined quadratic form on $W/U$, which is obviously symmetric and semi-positive definite.

We point out that $W_1$,  the extension of $U_1$,     may not be unique.  Nevertheless,  for any two such extensions  $W_1$ and   $\bar{W}_1$ of $U_1$,  if we write the corresponding decompositions in (\ref{eq-1.18})  as $v=v_0+ v_2 +n$  and $v= \bar{v}_0 + \bar{v}_2 + \bar{n}$ respectively,  then for any $v\in W$,  $v_0=\bar{v}_0$; and for any $v,v'\in V$,  
\begin{eqnarray}\label{eq-1.20}
\langle v_2,v'_2\rangle =\langle \bar{v}_2,\bar{v}'_2\rangle. 
\end{eqnarray}
It follows from (\ref{eq-1.20}) that the inherited quadratic form on $W/U$ given  by (\ref{eq-1.19})  does not depend on the extension $W_1$.   We obtain the first assertion.

By  the above argument,  it follows from  (\ref{eq-1.12})  and (\ref{eq-1.16})    that 
\begin{eqnarray*}
W/U\cong \big(\perp_{W_1}U_1\big)\oplus \big(N/(U\cap N)\big). 
\end{eqnarray*} 
 We  see that $\langle~,~\rangle$  is an inner product on $W/U$  if and only if it is strictly positive definite on $W/U$, which happens  if and only if 
\begin{eqnarray}\label{eq-1.588}
W/U\cong \perp_{W_1}U_1.
\end{eqnarray}
Therefore,   (\ref{eq-1.588})  holds if and only if 
\begin{eqnarray*}
N/(U\cap N)=0,
\end{eqnarray*}
which happens if and only if $N\subseteq U$. We obtain the second assertion.  
\end{proof}

With the help  of  Lemma~\ref{le-1.q},  we have the next lemma for linear maps between vector spaces with quadratic forms.   

\begin{lemma}\label{le-1.qq}
Let $W$  and $W'$  be vector spaces with (symmetric and semi-positive definite)  quadratic forms $\langle~,~\rangle$ and $\langle~,~\rangle'$  respectively.  Let $\varphi: W\longrightarrow W'$ be a linear isomorphism  such that for any $a,b\in W$,  $\langle \varphi(a),\varphi(b)\rangle'=\langle a,b\rangle$.  Let $U\subset W$ and $U'\subset W'$  be subspaces   such that $\varphi(U)=U'$.  Then $\varphi$  induces  a linear isomorphism 
\begin{eqnarray*}
\varphi_*: W/U\longrightarrow W'/U'
\end{eqnarray*}
such that  the induced  symmetric and semi-positive defnite quadratic forms on $W/U $ and $W'/U'$  are preserved.  
\end{lemma}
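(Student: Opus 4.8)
The plan is to build $\varphi_*$ at the level of cosets, check it is a well-defined linear isomorphism, and then verify it is an isometry for the quadratic forms supplied by Lemma~\ref{le-1.q}. For the first step I would set $\varphi_*(v+U)=\varphi(v)+U'$. Since $\varphi$ is a linear isomorphism with $\varphi(U)=U'$, whenever $v-v'\in U$ we get $\varphi(v)-\varphi(v')=\varphi(v-v')\in\varphi(U)=U'$, so $\varphi_*$ is well-defined; linearity is immediate, surjectivity follows from surjectivity of $\varphi$ together with $\varphi(U)=U'$, and injectivity follows from injectivity of $\varphi$ together with $\varphi^{-1}(U')=U$. This part is routine linear algebra.

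The substance of the lemma is that $\varphi_*$ preserves the induced quadratic forms, and the obstacle is that those forms were constructed in Lemma~\ref{le-1.q} from a non-canonical decomposition $v=v_0+v_2+n$. The key preliminary observation I would establish is that $\varphi$ carries the null subspace $N$ of $W$ onto the null subspace $N'$ of $W'$. Indeed, since $\langle\varphi(v),\varphi(v)\rangle'=\langle v,v\rangle$, we have $v\in N$ if and only if $\varphi(v)\in N'$, so $\varphi(N)=N'$. Combined with $\varphi(U)=U'$ and the injectivity of $\varphi$, this yields $\varphi(U\cap N)=U'\cap N'$.

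Next I would transport the whole decomposition across $\varphi$. Starting from data $W=W_1\oplus N$, $U=U_1\oplus(U\cap N)$ with $U_1\subseteq W_1$, and $W_1=(\perp_{W_1}U_1)\oplus U_1$ as in Lemma~\ref{le-1.q}, I would set $W_1'=\varphi(W_1)$ and $U_1'=\varphi(U_1)$. Because $\varphi$ preserves the quadratic forms and is an isomorphism, its restriction to $W_1$ is an inner-product-preserving isomorphism onto $W_1'$; hence $W'=W_1'\oplus N'$, the subspace $U_1'$ is a complement of $U'\cap N'$ inside $U'$, and $\varphi(\perp_{W_1}U_1)=\perp_{W_1'}U_1'$ since orthogonality is carried across $\varphi$. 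Thus $(W_1',U_1')$ is a legitimate choice of decomposition data for $W'$ in the sense of Lemma~\ref{le-1.q}.

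Finally I would compute directly. By the independence-of-extension statement proved in Lemma~\ref{le-1.q}, the induced form on $W'/U'$ may be evaluated using these transported data. For $v\in W$ written as $v=v_0+v_2+n$ with $v_0\in U_1$, $v_2\in\perp_{W_1}U_1$, $n\in N$, applying $\varphi$ gives $\varphi(v)=\varphi(v_0)+\varphi(v_2)+\varphi(n)$ with $\varphi(v_0)\in U_1'$, $\varphi(v_2)\in\perp_{W_1'}U_1'$, and $\varphi(n)\in N'$, which is exactly the corresponding decomposition of $\varphi(v)$. Hence $\langle\varphi_*(v+U),\varphi_*(v'+U)\rangle'=\langle\varphi(v_2),\varphi(v_2')\rangle'=\langle v_2,v_2'\rangle=\langle v+U,v'+U\rangle$, giving the desired isometry. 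The only real difficulty is the bookkeeping needed to make the non-canonical decompositions compatible under $\varphi$; once $\varphi(N)=N'$ is in hand and the independence result of Lemma~\ref{le-1.q} is invoked, the verification reduces entirely to this transport.
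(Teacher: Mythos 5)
Your proof is correct and takes essentially the same route as the paper's: the paper also defines $\varphi_*$ on cosets, asserts that $\varphi$ preserves the decompositions (\ref{eq-1.16}) and (\ref{eq-1.17}), and concludes with exactly your final computation $\langle v+U,w+U\rangle=\langle v_2,w_2\rangle=\langle\varphi(v_2),\varphi(w_2)\rangle'=\langle\varphi_*(v+U),\varphi_*(w+U)\rangle'$. Your intermediate steps --- proving $\varphi(N)=N'$, transporting the data $(W_1,U_1)$ to $(\varphi(W_1),\varphi(U_1))$, and invoking the independence-of-extension clause of Lemma~\ref{le-1.q} --- simply make explicit what the paper compresses into the single assertion that $\varphi$ preserves the decompositions.
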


\begin{proof}
It  is clear that $\varphi$  is  a linear isomorphism.  By Lemma~\ref{le-1.q},  there are  induced  symmetric and semi-positive defnite quadratic forms $\langle~,~\rangle$  on $W/U$  and $\langle~,~\rangle'$  on $W'/U'$.  Moreover,  $\varphi$  preserves the decompositions (\ref{eq-1.16})  and (\ref{eq-1.17}).  We verify that $\varphi$ preserves the quadratic forms  by
\begin{eqnarray*} 
\langle v+U, w+U\rangle&=&\langle v_2,w_2\rangle\\
&=&\langle \varphi(v_2),\varphi(w_2)\rangle'\\
&=&\langle \varphi(v+U), \varphi(w+U)\rangle'. 
\end{eqnarray*}
Here $ v+U$  and  $w+U$  are any two elements in $W/U$,  and $v_2$  and $w_2$ are in $\perp _{W_1}U_1$ given by (\ref{eq-1.18}). We obtain the lemma. 
\end{proof}

\smallskip

\section{Weighted Simplicial Complexes and Weighted Homology}
\label{s3}

Let $\mathcal{K}$  be a simplicial complex.  
Let $V$ be the set of the vertices of $\mathcal{K}$.  
Let $f$  and $g$ be two real-valued functions on  $V$.  
Then 
\begin{eqnarray*}
(f,g):  V\longrightarrow \mathbb{R}^2
\end{eqnarray*}  
is  a  vector-valued  function 
assigning a point in the plane 
to each vertex in $V$.   
 We call both $f$ and $g$ a {\bf  vertex-weight}  
on $\mathcal{K}$ and call the ordered triple  $(\mathcal{K},f,g)$ 
 a {\bf  vertex-weighted simplicial complex}.

Let $n\geq 0$.  
For  any $n$-simplex $\sigma=\{v_0,v_1,\ldots,v_n\}$ in $\mathcal{K}$,  
we define  the induced weights of $f$ and $g$  on $\sigma$  respectively  as 
\begin{eqnarray*}
f(\sigma)=\prod_{i=0}^n f(v_i), ~~~ g(\sigma)=\prod_{i=0}^n g(v_i). 
\end{eqnarray*} 
Let $C_n(\mathcal{K};\mathbb{R})$  be the vector space 
 consisting of all the formal linear combinations 
 of the $n$-simplices in $\mathcal{K}$.   
The $n$-th  {\bf $f$-weighted boundary  operator} is  a linear map 
\begin{eqnarray*}
\partial_n^f: C_n(\mathcal{K};\mathbb{R})
\longrightarrow C_{n-1}(\mathcal{K};\mathbb{R})
\end{eqnarray*}
 defined by 
\begin{eqnarray}\label{eq-wbo}
\partial_n^f\{v_0,v_1,\ldots, v_n\}=
\sum_{i=0}^n(-1)^i f(v_i) \{v_0,\ldots, \widehat{v_i}, \ldots, v_n\}
\end{eqnarray}
for any $n$-simplex $\{v_0,v_1,\ldots, v_n\}$  
of $\mathcal{K}$.   Particularly,  for any $n$-simplex $\sigma$  in $\mathcal{K}$  with  $f(\sigma)\neq 0$,  (\ref{eq-wbo})  can be alternatively expressed as 
\begin{eqnarray}\label{eq-wbo1}
\partial_n^f \sigma=\sum_{i=0}^n (-1)^i \frac{f(\sigma)}{f(d_i\sigma)} d_i\sigma
\end{eqnarray}
where for each $0\leq i\leq n$,  $d_i\sigma$  is the $i$-th  $(n-1)$-face of $\sigma$  obtained   by removing the $i$-th vertex of $\sigma$.  Note that $f(\sigma)\neq 0$  implies $f(d_i\sigma)\neq 0$  for each $0\leq i\leq n$.  Hence (\ref{eq-wbo1}) makes sense.  
It can be verified that 
\begin{eqnarray*}
\partial^f_{n}\partial^f_{n+1}=0
\end{eqnarray*}
 for any $n\geq 0$.  Thus 
\begin{eqnarray}\label{eq-wbo2}
\{C_n(\mathcal{K};\mathbb{R}),  \partial_n^f \}_{n\geq 0}
\end{eqnarray}
  is a  chain complex.

 The  {\bf $g$-weighted quadratic form} on $C_n(\mathcal{K};\mathbb{R})$  is  a symmetric  and  semi-positive definite quadratic form 
\begin{eqnarray}\label{eq-wip}
\langle~,~\rangle_g:  C_n(\mathcal{K};\mathbb{R})\times C_n(\mathcal{K}; \mathbb{R})\longrightarrow\mathbb{R}
\end{eqnarray}
given by 
\begin{eqnarray*}
\langle \sigma,\tau\rangle_g &=& g(\sigma)g(\tau) \delta(\sigma,\tau)\\
&=&\prod_{i=0}^n  g(v_i)g(u_i)\delta(v_i,u_i)
\end{eqnarray*}
for any  $n$-simplices  $\sigma=\{v_0,v_1,\ldots, v_n\}$   and  $\tau=\{u_0,u_1,\ldots, u_n\}$  in $\mathcal{K}$.   Here for any vertices $v$  and $u$,  we use the notation $\delta(v,u)=0$  if $v\neq u$  and $\delta(v,u)=1$  if $v=u$;  and  for any simplices $\sigma$ and $\tau$,  we use the notation  $\delta(\sigma,\tau)=0$  if $\sigma\neq \tau$  and $\delta(\sigma,\tau)=1$  if $\sigma=\tau$.

We notice that  for each $n\geq 0$,  (\ref{eq-wip}) gives a  $g$-weighted  quadratic form on the  vector space in the chain complex (\ref{eq-wbo2}).  We  use the following notation 
\begin{eqnarray}\label{eq-wcc}
C_*(\mathcal{K},f,g;\mathbb{R})= \{C_n(\mathcal{K};\mathbb{R}),\partial_n^f,\langle~,~\rangle_g\}_{n\geq 0}
\end{eqnarray}
to denote the chain complex  (\ref{eq-wbo2})  with the $f$-weighted boundary operators $\partial_n^f$ and the $g$-weighted quadratic forms $\langle~,~\rangle_g$.    
 The  $n$-th {\bf $(f,g)$-weighted homology} of $\mathcal{K}$ (with coefficients in real numbers) is defined as  the $n$-th homology group of the chain complex   $C_*(\mathcal{K},f,g;\mathbb{R})$,  i.e.  the quotient space    
\begin{eqnarray*}
H_n(\mathcal{K},f,g;\mathbb{R})={\rm Ker}(\partial_n^f)/{\rm Im}(\partial_{n+1}^f). 
\end{eqnarray*}
The next theorem says that for each $n\geq 0$,     the quadratic form  $\langle~,~\rangle_g$  on  $C_n(\mathcal{K}; \mathbb{R})$  induces a  quadratic form,  which is still denoted as  $\langle~,~\rangle_g$,  on the $n$-th $(f,g)$-weighted homology group $H_n(\mathcal{K},f,g;\mathbb{R})$. 

\begin{theorem}\label{pr-3.19.g}
For any vertex-weighted simplicial complex $(\mathcal{K},f,g)$ and any $n\geq 0$, 
$H_n(\mathcal{K},f,g;\mathbb{R})$  is a vector space with  a symmetric and semi-positive definite quadratic form $\langle~,~\rangle_g$.  
\end{theorem}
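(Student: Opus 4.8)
The plan is to realize $H_n(\mathcal{K},f,g;\mathbb{R})$ as a quotient of one subspace of $C_n(\mathcal{K};\mathbb{R})$ by another and then transport the quadratic form through Lemma~\ref{le-1.q}. The argument is purely structural: once homology is recognized as a subquotient, everything follows from the fact that both passing to a subspace and passing to a quotient preserve symmetry and semi-positive definiteness, the latter being exactly the content of Lemma~\ref{le-1.q}.

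First I would restrict the $g$-weighted quadratic form $\langle~,~\rangle_g$ from $C_n(\mathcal{K};\mathbb{R})$ to the subspace ${\rm Ker}(\partial_n^f)$. Restriction of a symmetric bilinear form to a subspace is again symmetric, and since $\langle v,v\rangle_g\geq 0$ for every $v\in C_n(\mathcal{K};\mathbb{R})$ the same inequality holds a fortiori for every $v$ in the subspace. Hence the restriction is again symmetric and semi-positive definite; I continue to denote it $\langle~,~\rangle_g$. This step is immediate and needs no computation with the explicit formula for the form.

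Next, because $(\ref{eq-wbo2})$ is a chain complex we have $\partial_n^f\partial_{n+1}^f=0$, so ${\rm Im}(\partial_{n+1}^f)$ is a subspace of ${\rm Ker}(\partial_n^f)$. Therefore
\[
H_n(\mathcal{K},f,g;\mathbb{R})={\rm Ker}(\partial_n^f)/{\rm Im}(\partial_{n+1}^f)
\]
is genuinely a quotient of the vector space $W:={\rm Ker}(\partial_n^f)$, equipped with the restricted semi-positive definite form, by its subspace $U:={\rm Im}(\partial_{n+1}^f)$. Applying Lemma~\ref{le-1.q} to the pair $(W,U)$ then produces a symmetric and semi-positive definite quadratic form on $W/U=H_n(\mathcal{K},f,g;\mathbb{R})$, which is the assertion.

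I do not expect any genuine obstacle, as the only delicate point — that the inherited form on a quotient is well-defined independently of the auxiliary splittings — has already been handled inside Lemma~\ref{le-1.q}. The sole remaining task is the bookkeeping identifying homology as a subquotient. As a side remark, the ``inner product'' clause of Lemma~\ref{le-1.q} shows that the induced form is strictly positive definite precisely when the null subspace of $\langle~,~\rangle_g$ on ${\rm Ker}(\partial_n^f)$ lies inside ${\rm Im}(\partial_{n+1}^f)$; this holds automatically when $g$ is non-vanishing, since then $\langle~,~\rangle_g$ is already an inner product on $C_n(\mathcal{K};\mathbb{R})$ with trivial null subspace, consistent with the remark following the theorem.
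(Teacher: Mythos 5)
Your proposal is correct and matches the paper's own proof essentially verbatim: the paper likewise sets $W={\rm Ker}\,\partial_n^f$ and $U={\rm Im}\,\partial_{n+1}^f$, notes both inherit the symmetric semi-positive definite form $\langle~,~\rangle_g$ as subspaces of $C_n(\mathcal{K};\mathbb{R})$, and applies Lemma~\ref{le-1.q} to the quotient $W/U=H_n(\mathcal{K},f,g;\mathbb{R})$. Your closing side remark on strict positive definiteness is also consistent with the corollary the paper states immediately afterward, and is in fact stated slightly more sharply (using the null subspace of the restricted form on ${\rm Ker}\,\partial_n^f$ rather than all of $N_n(\mathcal{K},g;\mathbb{R})$).
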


\begin{proof}
In Lemma~\ref{le-1.q},  we let the space $W$  be  ${\rm Ker}\partial_n^f$ and let the sub-space $U$  of $W$  be ${\rm Im}\partial_{n+1}^f$.  As sub-spaces of $C_n(\mathcal{K};\mathbb{R})$,  both $W$  and $U$ have an inherited  symmetric and semi-positive definite quadratic forms  $\langle~,~\rangle_g$.   Thus by Lemma~\ref{le-1.q},  the quotient space $W/U$, which by definition is the $(f,g)$-weighted homology $H_n(\mathcal{K},f,g;\mathbb{R})$,  has an induced symmetric and semi-positive definite quadratic form $\langle~,~\rangle_g$. 
\end{proof}

For each $n\geq 0$,   the null sub-space of $C_n(\mathcal{K};\mathbb{R})$ with respect to $\langle~,~\rangle_g$  is given by 
\begin{eqnarray*}
N_n(\mathcal{K},g;\mathbb{R})&=&\{  b\in C_n(\mathcal{K};\mathbb{R})\mid \langle b,b\rangle_g=0\} \\
&=&\{  b\in C_n(\mathcal{K};\mathbb{R})\mid \langle b,a\rangle_g=0{\rm ~for~any~} a\in C_n(\mathcal{K};\mathbb{R})\}. 
\end{eqnarray*}
We denote  $N_*(\mathcal{K},g;\mathbb{R})$  for the graded vector space $\{N_n(\mathcal{K},g;\mathbb{R})\mid n=0,1,2,\ldots\}$.   By applying the  second assertion in  Lemma~\ref{le-1.q}  to  the proof of Theorem~\ref{pr-3.19.g},  the next corollary follows   
 immediately.  

\begin{corollary}
For any vertex-weighted simplicial complex $(\mathcal{K},f,g)$ and any $n\geq 0$, 
 $\langle~,~\rangle_g$  is an inner product on $H_n(\mathcal{K},f,g;\mathbb{R})$ if and only if   $N_n(\mathcal{K},g;\mathbb{R})$  is a sub-space of $\partial^f_{n+1} C_n(\mathcal{K};\mathbb{R})$.  \qed
\end{corollary}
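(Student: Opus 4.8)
The plan is to deduce the corollary directly from the second assertion of Lemma~\ref{le-1.q}, applied to the very data used in the proof of Theorem~\ref{pr-3.19.g}. Concretely, I take $W={\rm Ker}\,\partial_n^f$ and $U={\rm Im}\,\partial_{n+1}^f=\partial_{n+1}^f C_{n+1}(\mathcal{K};\mathbb{R})$, both regarded as subspaces of $C_n(\mathcal{K};\mathbb{R})$ carrying the restricted quadratic form $\langle~,~\rangle_g$, so that $H_n(\mathcal{K},f,g;\mathbb{R})=W/U$. Since $\partial_n^f\partial_{n+1}^f=0$ we have $U\subseteq W$, so Lemma~\ref{le-1.q} applies and asserts that $\langle~,~\rangle_g$ is an inner product on $W/U$ if and only if the null subspace $N$ of $(W,\langle~,~\rangle_g)$ satisfies $N\subseteq U$. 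Once $N$ is identified with $N_n(\mathcal{K},g;\mathbb{R})$, the two implications of the corollary are nothing but the two directions of this single equivalence, so that neither direction — in particular neither the forward nor the converse implication — needs a separate argument.

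The substance therefore lies in computing $N$ and matching it with $N_n(\mathcal{K},g;\mathbb{R})$. Here I would use the characterization recalled at the start of Section~\ref{s2}: for a semi-positive definite form the null subspace is cut out by the single equation $\langle v,v\rangle_g=0$. Since $\langle~,~\rangle_g$ is diagonal in the simplex basis with diagonal entries $g(\sigma)^2$, a chain $v=\sum_\sigma c_\sigma\sigma$ satisfies $\langle v,v\rangle_g=\sum_\sigma c_\sigma^2 g(\sigma)^2=0$ exactly when $c_\sigma=0$ for every simplex $\sigma$ with $g(\sigma)\neq 0$. Thus the $g$-null chains are precisely those supported on the simplices on which $g$ vanishes, and this is exactly the defining description of $N_n(\mathcal{K},g;\mathbb{R})$. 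This description is intrinsic to the chain $v$ and makes no reference to an ambient space, which is what lets me phrase the criterion with the full null subspace $N_n(\mathcal{K},g;\mathbb{R})$ of $C_n(\mathcal{K};\mathbb{R})$ rather than with an auxiliary subspace of $W$.

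I expect the main obstacle to be the reconciliation of these two occurrences of the null subspace: Lemma~\ref{le-1.q} feeds on the null subspace of $W={\rm Ker}\,\partial_n^f$, whereas the stated criterion uses the null subspace $N_n(\mathcal{K},g;\mathbb{R})$ of the whole chain group $C_n(\mathcal{K};\mathbb{R})$. Everything hinges on the compatibility $N_n(\mathcal{K},g;\mathbb{R})\subseteq{\rm Ker}\,\partial_n^f$, i.e. on whether every $g$-null chain is automatically an $f$-weighted cycle: granting this, the null subspace seen by Lemma~\ref{le-1.q} is exactly $N_n(\mathcal{K},g;\mathbb{R})$, the lemma's containment $N\subseteq U$ becomes precisely the stated containment $N_n(\mathcal{K},g;\mathbb{R})\subseteq\partial_{n+1}^f C_{n+1}(\mathcal{K};\mathbb{R})$, and both implications of the corollary close simultaneously. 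I therefore regard the analysis of how $\partial_n^f$ acts on chains supported on the $g$-null simplices — and hence the precise comparison of the vanishing loci of $f$ and $g$ — as the decisive step of the argument, and I would isolate this compatibility as the key lemma underlying the corollary.
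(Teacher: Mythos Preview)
Your approach is exactly the paper's: apply the second assertion of Lemma~\ref{le-1.q} with $W=\mathrm{Ker}\,\partial_n^f$ and $U=\mathrm{Im}\,\partial_{n+1}^f$. The paper says nothing more than this, and in particular does not address the reconciliation you flag between the null subspace of $W$ and the full null subspace $N_n(\mathcal{K},g;\mathbb{R})$ of $C_n(\mathcal{K};\mathbb{R})$.

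You are right that this reconciliation is the crux, but the compatibility $N_n(\mathcal{K},g;\mathbb{R})\subseteq\mathrm{Ker}\,\partial_n^f$ that you hope to isolate is \emph{false in general}, and with it the ``only if'' direction of the corollary as stated. Take $\mathcal{K}=\{v_0,v_1,v_2,\,v_0v_1,\,v_0v_2\}$, $f\equiv 1$, and $g(v_0)=0$, $g(v_1)=g(v_2)=1$. Then $N_1(\mathcal{K},g;\mathbb{R})=\mathrm{span}\{v_0v_1,v_0v_2\}$ is two-dimensional, while $\partial_1^f$ is injective on $C_1$, so $\mathrm{Ker}\,\partial_1^f=0$; hence $N_1\not\subseteq\mathrm{Ker}\,\partial_1^f$. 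Since $C_2=0$ we get $H_1=0$, on which $\langle~,~\rangle_g$ is trivially an inner product, yet $N_1\not\subseteq\partial_2^f C_2=0$. What Lemma~\ref{le-1.q} actually yields is the equivalence with $N_n(\mathcal{K},g;\mathbb{R})\cap\mathrm{Ker}\,\partial_n^f\subseteq\mathrm{Im}\,\partial_{n+1}^f$; the stated containment $N_n\subseteq\mathrm{Im}\,\partial_{n+1}^f$ is sufficient (since $\mathrm{Im}\,\partial_{n+1}^f\subseteq\mathrm{Ker}\,\partial_n^f$) but not necessary. So your diagnosis is sharper than the paper's one-line proof, and the ``key lemma'' you propose to prove cannot be proved without extra hypotheses relating the zero sets of $f$ and $g$.
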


For each vertex $v\in V$,  we  let ${\rm star}_{\mathcal{K}}(v)$  be the open star of $v$ in $\mathcal{K}$ given by the set of the simplices containing $v$ as a vertex: 
\begin{eqnarray*}
{\rm star}_\mathcal{K}(v)=\{\sigma\in \mathcal{K}\mid v\in\sigma\}. 
\end{eqnarray*} 
We have the next lemma.

\begin{lemma}\label{le-2.3}
Let $(\mathcal{K},f,g)$  be a  vertex-weighted simplicial complex. Then for any simplex  $\sigma\in \mathcal{K}$,  
\begin{eqnarray}\label{eq-2.8}
g(\sigma)=0 \Longleftrightarrow \sigma\in \bigcup_{v\in V,\atop g(v)=0 } {\rm~star}_\mathcal{K}(v).
\end{eqnarray}
Thus  for each $n\geq 0$,  $N_n(\mathcal{K},g;\mathbb{R})$  has  a  basis consisting of all the $n$-simplices in the union of open stars 
\begin{eqnarray}\label{eq-2.11}
\bigcup_{v\in V,\atop g(v)=0 } {\rm~star}_\mathcal{K}(v).
\end{eqnarray} 
\end{lemma}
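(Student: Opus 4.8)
The plan is to prove the two assertions in order, deducing the basis statement from the equivalence (\ref{eq-2.8}) together with the diagonal structure of the $g$-weighted quadratic form.

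First I would establish (\ref{eq-2.8}). Writing $\sigma=\{v_0,v_1,\ldots,v_n\}$, the induced weight is the product $g(\sigma)=\prod_{i=0}^n g(v_i)$. Since $\mathbb{R}$ has no zero divisors, this product vanishes if and only if at least one factor $g(v_i)$ vanishes, that is, if and only if $\sigma$ has a vertex $v$ with $g(v)=0$. By the definition of the open star, $\sigma$ contains such a vertex if and only if $\sigma\in{\rm star}_{\mathcal{K}}(v)$ for some $v\in V$ with $g(v)=0$, which is exactly the right-hand side of (\ref{eq-2.8}).

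Next I would analyze $N_n(\mathcal{K},g;\mathbb{R})$ using the fact that the $n$-simplices form an orthogonal basis of $C_n(\mathcal{K};\mathbb{R})$ for $\langle~,~\rangle_g$. Indeed, by the defining formula $\langle\sigma,\tau\rangle_g=g(\sigma)g(\tau)\delta(\sigma,\tau)$ we have $\langle\sigma,\tau\rangle_g=0$ for $\sigma\neq\tau$ and $\langle\sigma,\sigma\rangle_g=g(\sigma)^2$. Hence for any chain $b=\sum_\sigma c_\sigma\sigma$ one computes $\langle b,b\rangle_g=\sum_\sigma c_\sigma^2 g(\sigma)^2$, a sum of non-negative terms. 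Therefore $\langle b,b\rangle_g=0$ if and only if $c_\sigma^2 g(\sigma)^2=0$ for every $\sigma$, equivalently $c_\sigma=0$ whenever $g(\sigma)\neq 0$. Thus $b\in N_n(\mathcal{K},g;\mathbb{R})$ precisely when $b$ is supported on the $n$-simplices $\sigma$ with $g(\sigma)=0$.

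Finally I would conclude. The $n$-simplices $\sigma$ with $g(\sigma)=0$ form a subset of the standard basis of $C_n(\mathcal{K};\mathbb{R})$, hence are linearly independent, and by the previous step they span $N_n(\mathcal{K},g;\mathbb{R})$; so they constitute a basis of $N_n(\mathcal{K},g;\mathbb{R})$. Combining this with (\ref{eq-2.8}) identifies these simplices with exactly the $n$-simplices lying in the union of open stars (\ref{eq-2.11}), giving the claim. This argument is a direct computation rather than a deep statement; the only point requiring a little care is the observation that $\langle b,b\rangle_g$ is a sum of squares, so that its vanishing forces each coefficient $c_\sigma$ to vanish on the support where $g$ is non-zero, rather than being achieved through cancellation.
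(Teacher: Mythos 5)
Your proposal is correct and follows essentially the same route as the paper: the equivalence (\ref{eq-2.8}) via the vanishing of a factor in the product $g(\sigma)=\prod_i g(v_i)$, and then the basis statement for $N_n(\mathcal{K},g;\mathbb{R})$. The only difference is that the paper asserts the basis claim directly ``by the definition of $N_n(\mathcal{K},g;\mathbb{R})$,'' while you explicitly carry out the sum-of-squares computation $\langle b,b\rangle_g=\sum_\sigma c_\sigma^2\, g(\sigma)^2$ showing no cancellation can occur --- a detail the paper leaves implicit, and which you are right to flag as the one point needing care.
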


\begin{proof}
Let $\sigma\in\mathcal{K}$.  
We write $\sigma$ in the form $\{v_0,v_1,\ldots,v_n\}$ for some $n\geq 0$.  Since $f$ is a vertex-weight,  it follows from the definition of $f(\sigma)$   that $f(\sigma)=0$  if and only if $f(v_i)=0$  for some $0\leq i\leq n$.  This happens if and only if $\sigma$ is in the open star of $v_i$  for some $v_i\in V$ with  $f(v_i)=0$.  Thus we obtain (\ref{eq-2.8}).  Consequently, by the definition  of $N_n(\mathcal{K},g;\mathbb{R})$,  it  follows from (\ref{eq-2.8})  that for each $n\geq 0$,  $N_n(\mathcal{K},g;\mathbb{R})$ has a basis  consisting    of  all  the $n$-simplices in (\ref{eq-2.11}).  We obtain  the lemma. 
\end{proof}

The following lemma shows that if we take the quotient space of $N_n(\mathcal{K},g;\mathbb{R})$  in $C_n (\mathcal{K};\mathbb{R})$,  then 
we will obtain the chain complex of the maximal  sub-simplicial complex $\mathcal{K}^\times_g$ of $\mathcal{K}$  where $g$  is non-vanishing.

\begin{lemma}\label{le-2.3y}
Let $(\mathcal{K},f,g)$  be a  vertex-weighted simplicial complex.  Then the sub-simplicial complex 
\begin{eqnarray*} 
\mathcal{K}_g^\times=\{ \sigma\in\mathcal{K}\mid g(\sigma)\neq 0\}
\end{eqnarray*}
is the maximal sub-simplicial complex of $\mathcal{K}$ such that $g$  is non-vanishing on all of its vertices.  Moreover, for each $n\geq 0$,   with respect to the inner products  $\langle~,~\rangle_g$,     there  a canonical  linear isometry  
\begin{eqnarray}\label{eq-3.19-1}
\iota_n: C_n(\mathcal{K}^\times_g;\mathbb{R})\longrightarrow  C_n(\mathcal{K};\mathbb{R})/N_n(\mathcal{K},g;\mathbb{R}). 
\end{eqnarray}
\end{lemma}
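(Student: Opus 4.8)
The plan is to verify the two assertions in turn, the first being essentially a definitional check and the second being the construction of the isometry $\iota_n$. First I would prove that $\mathcal{K}^\times_g$ is a sub-simplicial complex and is maximal with the stated property. To see it is a sub-complex, I take $\sigma \in \mathcal{K}^\times_g$ and any non-empty face $\tau \subseteq \sigma$; writing $\sigma = \{v_0,\ldots,v_n\}$, the condition $g(\sigma)=\prod_i g(v_i) \neq 0$ forces $g(v_i)\neq 0$ for every $i$, hence in particular $g(\tau)=\prod_{v_i \in \tau} g(v_i) \neq 0$, so $\tau \in \mathcal{K}^\times_g$. For maximality, note that the vertices of $\mathcal{K}^\times_g$ are exactly those $v$ with $g(v)\neq 0$ (by the $n=0$ case of the factorization), and any sub-complex on which $g$ is non-vanishing can only use such vertices and hence only simplices $\sigma$ with $g(\sigma)\neq 0$, so it is contained in $\mathcal{K}^\times_g$.

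For the second assertion I would construct $\iota_n$ explicitly on basis elements. By Lemma~\ref{le-2.3}, the null sub-space $N_n(\mathcal{K},g;\mathbb{R})$ has a basis consisting of all $n$-simplices $\sigma$ with $g(\sigma)=0$, so the quotient $C_n(\mathcal{K};\mathbb{R})/N_n(\mathcal{K},g;\mathbb{R})$ has a basis given by the cosets $\sigma + N_n(\mathcal{K},g;\mathbb{R})$ as $\sigma$ ranges over the $n$-simplices with $g(\sigma)\neq 0$, which are precisely the $n$-simplices of $\mathcal{K}^\times_g$. The natural candidate is therefore
\begin{eqnarray*}
\iota_n(\sigma) = \sigma + N_n(\mathcal{K},g;\mathbb{R})
\end{eqnarray*}
for each $n$-simplex $\sigma \in \mathcal{K}^\times_g$, extended linearly. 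This is a bijection on the respective bases, hence a linear isomorphism.

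The main point to check is that $\iota_n$ is an isometry with respect to the induced quadratic forms. On $C_n(\mathcal{K}^\times_g;\mathbb{R})$ the form $\langle~,~\rangle_g$ is genuinely an inner product, since $g$ is non-vanishing on $\mathcal{K}^\times_g$ and therefore $N_n(\mathcal{K}^\times_g, g;\mathbb{R}) = 0$; on the target the form is the one induced on the quotient by Lemma~\ref{le-1.q}, which (again since $N_n$ is exactly the null space being quotiented out) is an inner product. I would compute, for two $n$-simplices $\sigma,\tau \in \mathcal{K}^\times_g$,
\begin{eqnarray*}
\langle \iota_n(\sigma), \iota_n(\tau)\rangle_g = \langle \sigma+N_n, \tau+N_n\rangle_g = \langle \sigma,\tau\rangle_g = g(\sigma)g(\tau)\delta(\sigma,\tau),
\end{eqnarray*}
where the middle equality uses the well-definedness of the induced form from Lemma~\ref{le-1.q} together with the fact that each $\sigma$ lies in the complement of $N_n$ (its component along $N_n$ vanishes). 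Since the right-hand side is exactly $\langle \sigma,\tau\rangle_g$ computed in $C_n(\mathcal{K}^\times_g;\mathbb{R})$, the map preserves the form on basis pairs and hence everywhere by bilinearity.

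The step I expect to require the most care is confirming that the induced inner product on the quotient, as constructed abstractly through the decompositions (\ref{eq-1.16})--(\ref{eq-1.19}) in Lemma~\ref{le-1.q}, really does restrict to $\langle \sigma,\tau\rangle_g$ on the cosets of simplices with $g(\sigma)\neq 0$; concretely, one must observe that the basis $\{\sigma : g(\sigma)\neq 0\}$ spans a subspace complementary to $N_n$ that is orthogonal to $N_n$ (because distinct simplices are orthogonal under $\langle~,~\rangle_g$), so this complementary subspace can be taken as the $W_1$ of Lemma~\ref{le-1.q} and the induced form on the quotient agrees with the restriction of $\langle~,~\rangle_g$ to it. Once this identification of the complement is made, the isometry claim becomes the routine computation above, and the word \emph{canonical} in the statement refers precisely to the fact that this complement is forced by the simplex basis and does not depend on any arbitrary choice.
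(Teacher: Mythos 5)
Your proof is correct and follows essentially the same route as the paper: the same map $\iota_n(\sigma)=\sigma+N_n(\mathcal{K},g;\mathbb{R})$, the same identification of bases via Lemma~\ref{le-2.3}, and the same isometry computation on basis simplices. The only harmless deviation is that you route the induced inner product on the quotient through the general Lemma~\ref{le-1.q}, which is why your final paragraph worries about choosing a complement $W_1$; since the quotient here is by the null space $N_n(\mathcal{K},g;\mathbb{R})$ itself, the paper instead invokes Lemma~\ref{le-1.1}, which defines the induced form directly on representatives, so the cross terms vanish by Cauchy--Schwarz and no identification of a complement is needed.
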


\begin{proof}
Let $(\mathcal{K},f,g)$  be a  vertex-weighted simplicial complex.   It follows from Lemma~\ref{le-2.3}  that the maximal sub-simplicial complex $\mathcal{K}_g^\times$  such that $g$  is non-vanishing on all of its vertices is the complement of the union  (\ref{eq-2.11})  of open stars 
\begin{eqnarray*}
\mathcal{K}_g^\times =\mathcal{K}\setminus \Big(\bigcup_{v\in V, \atop g(v)=0} {\rm~star}_\mathcal{K}(v)\Big).  
\end{eqnarray*}
 Consequently,  for each $n\geq 0$,  we have a linear isomorphism $\iota_n$ in (\ref{eq-3.19-1}) sending an $n$-simplex $\sigma\in\mathcal{K}^\times_g$  to the co-set $\sigma+N_n(\mathcal{K},g;\mathbb{R})$.  By Lemma~\ref{le-1.1},    for each $n\geq 0$, the quotient space $C_n(\mathcal{K};\mathbb{R})/N_n(\mathcal{K},g;\mathbb{R})$ has an induced inner product $\langle~,~\rangle_g$.  Since $\mathcal{K}_g^\times $  is a sub-simplicial complex of $\mathcal{K}$,  $C_n(\mathcal{K}^\times_g;\mathbb{R})$  is a sub-space of $C_n(\mathcal{K};\mathbb{R})$ thus it inherits a quadratic form $\langle~,~\rangle_g$.   Note that $\langle~,~\rangle_g$ is strictly positive-definite  on $C_n(\mathcal{K}^\times_g;\mathbb{R})$  hence   it  is an inner product on $C_n(\mathcal{K}^\times_g;\mathbb{R})$.   For any simplices $\sigma$ and $\tau$ in $\mathcal{K}^\times_g$,  it can be verified that $\iota_n$ preserves the inner products:
\begin{eqnarray*}
\langle \iota_n(\sigma),\iota_n(\tau)\rangle_g&=& \langle \sigma+N_n(\mathcal{K},g;\mathbb{R}), \tau+N_n(\mathcal{K},g;\mathbb{R})\rangle_g\\
&=& \langle \sigma+\mu, \tau+\nu\rangle_g\\
&=&\langle\sigma,\tau\rangle_g+ \langle \sigma,\nu\rangle_g+\langle \mu,\tau\rangle_g +\langle \mu,\nu\rangle_g\\
&=&\langle \sigma,\tau\rangle_g. 
\end{eqnarray*}
Here  $\mu$ and $\nu$ are arbitrary elements  in $N_n(\mathcal{K},g;\mathbb{R})$.  Thus $\iota_n$ is a linear  isometry.  
\end{proof}

By Lemma~\ref{le-2.3y},  for each $n\geq 0$ we can define the $f$-weighted {\bf reduced boundary operator}   
\begin{eqnarray}\label{eq-3.22-1}
\tilde {\partial}^f_n: C_n(\mathcal{K};\mathbb{R})/N_n(\mathcal{K},g;\mathbb{R})\longrightarrow C_n(\mathcal{K};\mathbb{R})/N_n(\mathcal{K},g;\mathbb{R})
\end{eqnarray}
as 
\begin{eqnarray}\label{eq-3.22-2}
\tilde{\partial}^f_n= \iota_n\circ(\partial_n^f\mid_{\mathcal{K}_g^\times})\circ \iota_n^{-1}. 
\end{eqnarray}
Here 
\begin{eqnarray}\label{eq-3.22-3}
\partial_n^f\mid_{\mathcal{K}_g^\times}: C_n(\mathcal{K}^\times_g;\mathbb{R})\longrightarrow C_{n-1}(\mathcal{K}^\times_g;\mathbb{R})
\end{eqnarray}
  is the $n$-th $f$-weighted boundary operator of $\mathcal{K}^\times _g$, which is obtained by restricting the $f$-weighted boundary operator $\partial_n^f$   of $C_n(\mathcal{K};\mathbb{R})$  to  the  sub-chain complex  $C_n(\mathcal{K}^\times_g;\mathbb{R})$.    
Similar with (\ref{eq-wcc}),  we use the following notations   
\begin{eqnarray*}
C_*(\mathcal{K}^\times_g,f,g;\mathbb{R})=\{C_n(\mathcal{K}^\times_g;\mathbb{R}), \partial^f_n, \langle~,~\rangle_g\}_{n\geq 0}
\end{eqnarray*}
and
\begin{eqnarray*}
\tilde C_*(\mathcal{K},f,g;\mathbb{R})=\{C_n(\mathcal{K};\mathbb{R})/N_n(\mathcal{K},g;\mathbb{R}), \tilde \partial^f_n, \langle~,~\rangle_g\}_{n\geq 0} 
\end{eqnarray*}
to denote the chain complexes with the $f$-weighted boundary operators $\partial_n^f$  (or $\tilde\partial^f_n$)  and  the $g$-weighted inner products $\langle~,~\rangle_g$.  We  have the next proposition.

\begin{proposition}\label{pr-zzz}
Let $(\mathcal{K},f,g)$  be a vertex-weighted simplicial complex.  Then 
we  have an isomorphism of chain complexes 
\begin{eqnarray*}
\iota_*: \tilde C_*(\mathcal{K},f,g;\mathbb{R})\longrightarrow C_*(\mathcal{K}^\times_g,f,g;\mathbb{R}) 
\end{eqnarray*}
where  for each $n\geq 0$, $\iota_n$  is a linear isometry with respect to the corresponding inner products $\langle~,~\rangle_g$.   
\end{proposition}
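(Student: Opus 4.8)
The plan is to observe that the essential content of the proposition---that the relevant degreewise maps are linear isometries---has already been secured in Lemma~\ref{le-2.3y}, while the compatibility with the boundary operators is forced by the very definition of the reduced boundary operator. Thus the proof reduces to assembling facts already at hand rather than carrying out any new computation. Concretely, by Lemma~\ref{le-2.3y}, for each $n\geq 0$ the canonical map
\[
\iota_n: C_n(\mathcal{K}^\times_g;\mathbb{R})\longrightarrow C_n(\mathcal{K};\mathbb{R})/N_n(\mathcal{K},g;\mathbb{R})
\]
is a linear isomorphism which is simultaneously a linear isometry for the inner products $\langle~,~\rangle_g$ (the restricted inner product on the source, the induced inner product on the target). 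Since the inverse of a bijective isometry is again a linear isometry, each $\iota_n^{-1}$ is likewise a linear isometry, and the family $\iota_*=\{\iota_n^{-1}\}_{n\geq 0}$ supplies a degreewise isometric isomorphism from $\tilde C_*(\mathcal{K},f,g;\mathbb{R})$ onto $C_*(\mathcal{K}^\times_g,f,g;\mathbb{R})$.

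The one thing left to check is that $\iota_*$ is a morphism of chain complexes, i.e.\ that it intertwines $\tilde\partial^f_n$ with $\partial^f_n\mid_{\mathcal{K}^\times_g}$. But this is immediate from (\ref{eq-3.22-2}), where the reduced boundary operator was defined as the conjugate
\[
\tilde\partial^f_n=\iota_{n-1}\circ\bigl(\partial^f_n\mid_{\mathcal{K}^\times_g}\bigr)\circ\iota_n^{-1}.
\]
Rearranging this identity gives $\iota_{n-1}^{-1}\circ\tilde\partial^f_n=\bigl(\partial^f_n\mid_{\mathcal{K}^\times_g}\bigr)\circ\iota_n^{-1}$, which is precisely the commutativity of the square linking degrees $n$ and $n-1$ in the two complexes. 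Hence the differentials are intertwined by construction, and no separate verification is required.

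Combining the two points, each component of $\iota_*$ is a linear isomorphism preserving $\langle~,~\rangle_g$, and the family commutes with the boundary operators; together these say exactly that $\iota_*$ is an isomorphism of chain complexes whose components are linear isometries, which is the assertion of the proposition. I expect no genuine obstacle in this argument, since both ingredients are already established upstream. The only point demanding care is purely a matter of bookkeeping, namely the direction of the arrows: Lemma~\ref{le-2.3y} produces $\iota_n$ as a map \emph{out of} $C_n(\mathcal{K}^\times_g;\mathbb{R})$, whereas the proposition orients $\iota_*$ as a map out of $\tilde C_*(\mathcal{K},f,g;\mathbb{R})$, so in the proposition's direction one works with $\iota_n^{-1}$; as isometries are closed under inversion, this reversal is harmless.
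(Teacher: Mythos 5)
Your proposal is correct and follows essentially the same route as the paper, whose proof is a one-line appeal to the definition of $\iota_*$ in Lemma~\ref{le-2.3y}: the degreewise isometry comes from that lemma, and the chain-map property is automatic from the conjugation formula (\ref{eq-3.22-2}) defining $\tilde\partial^f_n$. You in fact spell this out more carefully than the paper does, correctly reorienting the arrows via $\iota_n^{-1}$ and silently repairing the index in (\ref{eq-3.22-2}) (the left factor should be $\iota_{n-1}$, as you wrote).
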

\begin{proof}
The proof follows by the definition of $\iota_*$  in Lemma~\ref{le-2.3y}. 
\end{proof}

By taking the homology of the chain complexes,  the isomorphism $\iota_*$  induces an isomorphism of homology groups. The next corollary follows from Proposition~\ref{pr-zzz} immediately. 

\begin{corollary}\label{co-3.19.x}
Let $(\mathcal{K},f,g)$  be a vertex-weighted simplicial complex.  Then for each $n\geq 0$,  the weighted homology $H_n(\mathcal{K}_g^\times, f,g;\mathbb{R})$ is isometrically isomorphic to the  homology $H_n(\tilde C_*(\mathcal{K},f,g;\mathbb{R}))$ of the chain complex   $\tilde C_*(\mathcal{K},f,g;\mathbb{R})$   with respect to the  inner products  $\langle~,~\rangle_g$.  \qed
\end{corollary}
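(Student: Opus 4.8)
The plan is to deduce the corollary from Proposition~\ref{pr-zzz} by passing to homology and then using Lemma~\ref{le-1.qq} to carry the inner products along. By Proposition~\ref{pr-zzz} we already have, for each $n\geq 0$, a linear isometry $\iota_n$ commuting with the boundary operators, so that $\iota_*$ is an isomorphism of chain complexes; in particular $\iota_{n-1}\circ\tilde\partial^f_n=\partial_n^f\circ\iota_n$ on the relevant subspaces.

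First I would record the purely homological consequence. Since $\iota_*$ is a chain map that is bijective in every degree, it sends cycles to cycles and boundaries to boundaries: from $\iota_{n-1}\circ\tilde\partial^f_n=\partial_n^f\circ\iota_n$ we obtain $\iota_n({\rm Ker}\,\tilde\partial^f_n)={\rm Ker}(\partial_n^f\mid_{\mathcal{K}_g^\times})$, and from $\iota_n\circ\tilde\partial^f_{n+1}=\partial_{n+1}^f\circ\iota_{n+1}$ together with the surjectivity of $\iota_{n+1}$ we obtain $\iota_n({\rm Im}\,\tilde\partial^f_{n+1})={\rm Im}(\partial_{n+1}^f\mid_{\mathcal{K}_g^\times})$. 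Hence $\iota_n$ descends to a linear isomorphism between the two quotients, yielding $H_n(\tilde C_*(\mathcal{K},f,g;\mathbb{R}))\cong H_n(\mathcal{K}_g^\times,f,g;\mathbb{R})$.

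The second step is to upgrade this isomorphism to an isometry for the induced forms $\langle~,~\rangle_g$, and this is precisely where Lemma~\ref{le-1.qq} does the work. I would apply that lemma with $W={\rm Ker}\,\tilde\partial^f_n$, $W'={\rm Ker}(\partial_n^f\mid_{\mathcal{K}_g^\times})$, $U={\rm Im}\,\tilde\partial^f_{n+1}$, $U'={\rm Im}(\partial_{n+1}^f\mid_{\mathcal{K}_g^\times})$, and $\varphi=\iota_n\mid_W$. The hypotheses are met: $\varphi$ is a linear isomorphism preserving $\langle~,~\rangle_g$ because $\iota_n$ is an isometry, and $\varphi(U)=U'$ by the boundary computation above. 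By Lemma~\ref{le-1.qq} the induced quadratic forms on $W/U$ and $W'/U'$ are preserved by the descended map, and these quotients are exactly the two homology groups carrying the forms $\langle~,~\rangle_g$ supplied by Theorem~\ref{pr-3.19.g}.

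The only point that deserves explicit attention — and where I would expect a careful reader to pause — is the identification of the abstractly induced form from Lemma~\ref{le-1.qq} with the concrete homology form $\langle~,~\rangle_g$ of Theorem~\ref{pr-3.19.g}. Both are obtained by the same recipe, namely restricting the quadratic form on cycles and passing to the quotient by the boundaries, so they agree; but this coincidence should be stated rather than left silent. Everything else is the routine observation that an isometric chain isomorphism induces an isometric isomorphism on homology, which is what makes the corollary follow immediately.
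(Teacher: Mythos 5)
Your proof is correct and takes essentially the same route as the paper, which deduces the corollary ``immediately'' from Proposition~\ref{pr-zzz} by passing to homology, with Lemma~\ref{le-1.qq} (used there exactly as in the proof of Theorem~\ref{th-isom}) transporting the induced forms to the quotients. Your explicit checks --- that $\iota_n$ carries kernels to kernels and images to images, and that the abstractly induced form of Lemma~\ref{le-1.qq} coincides with the homology form of Theorem~\ref{pr-3.19.g} --- simply fill in details the paper leaves implicit.
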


\smallskip

In the remaining part of this section,  we study the effects of $f$  and $g$ on the weighted homology $H_n(\mathcal{K}, f,g;\mathbb{R})$.  We prove the next lemma by using an argument similar with \cite[Lemma~3.1]{dig1}.

\begin{lemma}\label{le-5.a}
Let $\mathcal{K}$  be a simplicial complex with the set $V$ of its vertices.  Let  $f$ and  $g$   be any two real-valued functions on $V$.  Let $h$ a non-vanishing real-valued function on $V$.   Then  there is a canonical chain isomorphism 
\begin{eqnarray}\label{eq-mod}
\varphi: C_*(\mathcal{K},f,g;\mathbb{R})\longrightarrow  C_*(\mathcal{K},fh,gh;\mathbb{R}) 
\end{eqnarray}
given by 
\begin{eqnarray}\label{eq-rep}
\varphi\Big(  \sum  x_{\{v_0,v_1,\ldots, v_n\}} \{v_0,v_1,\ldots, v_n\} \Big)=\sum  \frac{x_{\{v_0,v_1,\ldots, v_n\}}}{h(v_0)h(v_1)\cdots h(v_n)} \{v_0,v_1,\ldots, v_n\} 
\end{eqnarray}
such that
\begin{eqnarray*}
\langle\varphi(a),\varphi(b)\rangle_{gh}=\langle a,b\rangle_g
\end{eqnarray*}
for any $n$-chains $a$  and  $b$ in $C_*(\mathcal{K},f,g;\mathbb{R})$. 
\end{lemma}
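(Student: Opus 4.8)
The plan is to verify directly that the prescribed diagonal rescaling $\varphi$ simultaneously intertwines the twisted boundary operators and preserves the weighted quadratic forms. Throughout I would exploit the single structural fact that all the induced vertex-weights are multiplicative on simplices: for any simplex $\sigma=\{v_0,\ldots,v_n\}$ one has $(fh)(\sigma)=f(\sigma)h(\sigma)$ and $(gh)(\sigma)=g(\sigma)h(\sigma)$, where $h(\sigma)=\prod_{i=0}^n h(v_i)$. Since $h$ is non-vanishing, $h(\sigma)\neq 0$ for every $\sigma\in\mathcal{K}$, so on the basis of $n$-simplices $\varphi$ is the diagonal map $\sigma\mapsto h(\sigma)^{-1}\sigma$ with nonzero diagonal entries; hence each $\varphi$ is a linear isomorphism, with inverse $\sigma\mapsto h(\sigma)\sigma$. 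This disposes of the isomorphism claim and shows $\varphi$ is canonical in that it depends only on $h$.

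Next I would check the chain-map identity $\partial_n^{fh}\circ\varphi=\varphi\circ\partial_n^f$ on a generating simplex $\sigma=\{v_0,\ldots,v_n\}$. Applying (\ref{eq-wbo}) to $\partial_n^f\sigma$ and then $\varphi$ produces, for each $i$, the term $(-1)^i f(v_i)\,h(d_i\sigma)^{-1} d_i\sigma$, where $d_i\sigma$ is the $i$-th face. Running the other way, $\varphi(\sigma)=h(\sigma)^{-1}\sigma$ and the definition (\ref{eq-wbo}) for the weight $fh$ give the term $(-1)^i f(v_i)h(v_i)\,h(\sigma)^{-1} d_i\sigma$. The two agree because $h(v_i)h(\sigma)^{-1}=h(d_i\sigma)^{-1}$, i.e. deleting the $i$-th vertex removes exactly the factor $h(v_i)$ from the product. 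Summing over $i$ yields the commutation, so $\varphi$ is a chain map between the complexes (\ref{eq-wbo2}) for the weights $f$ and $fh$.

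Finally I would establish the isometry property $\langle\varphi(a),\varphi(b)\rangle_{gh}=\langle a,b\rangle_g$. By bilinearity it is enough to test it on basis simplices $\sigma,\tau$, and by the definition (\ref{eq-wip}) both sides vanish unless $\sigma=\tau$. For $\sigma=\tau$ I would compute
\begin{eqnarray*}
\langle\varphi(\sigma),\varphi(\sigma)\rangle_{gh}=h(\sigma)^{-2}\,(gh)(\sigma)^2=h(\sigma)^{-2}\,g(\sigma)^2 h(\sigma)^2=g(\sigma)^2=\langle\sigma,\sigma\rangle_g,
\end{eqnarray*}
again using multiplicativity of $gh$ and the cancellation of the $h(\sigma)$ factors. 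This verifies that $\varphi$ carries $\langle~,~\rangle_g$ to $\langle~,~\rangle_{gh}$ exactly, completing all the required properties.

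There is no genuine obstacle here: the content of the lemma is precisely that the normalization factor $h(\sigma)^{-1}$ is chosen so that the extra $h(v_i)$ appearing in the twisted boundary and the extra $h(\sigma)$ appearing in the weighted form are absorbed uniformly. The only point deserving care is the bookkeeping of the products over vertices when a single vertex is deleted, namely the identity $h(v_i)h(\sigma)^{-1}=h(d_i\sigma)^{-1}$, which is the simplicial analogue of the rescaling used in \cite[Lemma~3.1]{dig1}.
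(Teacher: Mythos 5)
Your proposal is correct and follows essentially the same route as the paper's proof: a direct verification that the diagonal rescaling $\varphi$ is a chain map, a linear isomorphism (since $h$ is non-vanishing), and preserves the quadratic forms. The only difference is presentational --- you check everything on basis simplices via the identity $h(v_i)h(\sigma)^{-1}=h(d_i\sigma)^{-1}$ and invoke (bi)linearity, whereas the paper carries out the same cancellations on general chains with explicit sums.
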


\begin{proof}
 Let $n\geq 0$.   We denote an $n$-simplex of $\mathcal{K}$  as $\{v_0,v_1,\ldots,v_n\}$ where the vertices $v_i$ are distinct for $i=0,1,\ldots,n$  and $v_i\prec v_j$  for $0\leq i<j\leq n$ with respect to the total order $\prec$ on $V$.

Firstly,  we  take  
\begin{eqnarray*}
\sum  x_{\{v_0,v_1,\ldots, v_n\}} \{v_0,v_1,\ldots, v_n\}
\end{eqnarray*}
 to be any $n$-chain in $C_n(\mathcal{K};\mathbb{R})$.  Then  we have 
\begin{eqnarray*}
&&\partial_n^{fh}\circ \varphi \Big(\sum  x_{\{v_0,v_1,\ldots, v_n\}} \{v_0,v_1,\ldots, v_n\}\Big)\\
&=&\partial_n^{fh} \Big(\sum  \frac{x_{\{v_0,v_1,\ldots, v_n\}}}{h(v_0)h(v_1)\cdots  h(v_n)} \{v_0,v_1,\ldots, v_n\}\Big)\\
&=&\sum  \frac{x_{\{v_0,v_1,\ldots, v_n\}}}{h(v_0)h(v_1)\cdots h(v_n)} \sum_{i=1}^n (-1)^i  f(v_i)h(v_i) \{v_0,\ldots, \widehat{v_i},\ldots, v_n\}\\
&=&\sum x_{\{v_0v_1\ldots v_n\}}\sum _{i=0}^n (-1)^i f(v_i)\frac{\{v_0,\ldots,  \widehat{v_i}, \ldots, v_n\}}{h(v_0)\cdots \widehat{h(v_i)}\cdots h(v_n)}\\
&=&\varphi\circ \partial_n^f\Big(\sum  x_{v_0,v_1,\ldots, v_n} \{v_0,v_1,\ldots, v_n\}\Big). 
\end{eqnarray*}
Thus $\varphi$ is a chain map.

Secondly,  since $h$  is non-vanishing on $V$,  it follows from (\ref{eq-rep})  directly that $\varphi$ is a linear isomorphism from $C_n(\mathcal{K};\mathbb{R})$  to itself.

 Thirdly,  let 
\begin{eqnarray*}
\sum  x_{\{v_0,v_1,\ldots, v_n\}} \{v_0,v_1,\ldots, v_n\} {\rm ~ and ~} \sum  y_{\{u_0,u_1,\ldots, u_n\}} \{u_0,u_1,\ldots, u_n\}
\end{eqnarray*}
   be any two of the $n$-chains in $C_n(\mathcal{K};\mathbb{R})$.   Then 
\begin{eqnarray*}
&&\Big\langle\varphi\Big(\sum  x_{\{v_0,v_1,\ldots, v_n\}} \{v_0,v_1, \ldots, v_n\}\Big),\varphi\Big(\sum  y_{\{u_0,u_1,\ldots, u_n\}} \{u_0,u_1,\ldots, u_n\}\Big)\Big\rangle_{gh}\\
&=&\sum\sum  \frac{x_{\{v_0,v_1,\ldots, v_n\}}}{h(v_0)h(v_1)\cdots h(v_n)}\cdot  \frac{y_{\{u_0,u_1,\ldots, u_n\}}}{h(u_0) h(u_1)\cdots h(u_n)}\langle\{v_0,v_1,\ldots, v_n\},\{u_0,u_1,\ldots, u_n\}\rangle_{gh}\\
&=&\sum\sum  \frac{x_{\{v_0,v_1,\ldots, v_n\}}}{h(v_0)h(v_1)\cdots h(v_n)}\cdot  \frac{y_{\{u_0,u_1,\ldots, u_n\}}}{h(u_0) h(u_1)\cdots h(u_n)}\prod_{i=0}^n g(v_i)h(v_i) g(u_i)h(u_i)\delta(v_i,u_i)\\
&=&\sum\sum  x_{\{v_0,v_1,\ldots, v_n\}}y_{\{u_0,u_1,\ldots, u_n\}} \prod_{i=0}^n g(v_i)g(u_i) \delta(v_i,u_i)\\
&=&\Big\langle\sum  x_{\{v_0,v_1,\ldots, v_n\}} \{v_0,v_1,\ldots, v_n\},  \sum  y_{\{u_0,u_1,\ldots, u_n\}} \{u_0,u_1,\ldots, u_n\}\Big\rangle_g. 
\end{eqnarray*}
Thus $\varphi$  preserves the quadratic forms with respect to $\langle~,~\rangle_{gh}$  and $\langle~,~\rangle_g$.

Summarizing all the above three steps,  we obtain the lemma. 
\end{proof}

The next corollary follows directly  by applying Lemma~\ref{le-5.a} to  the sub-simplicial  complex  $\mathcal{K}^\times_g$.  

\begin{corollary}\label{co-3.25.1}
Let $\mathcal{K}$  be a simplicial complex with the set $V$ of its vertices.  Let  $f$ and  $g$   be any two real-valued functions on $V$.  Let $h$ a non-vanishing real-valued function on $V$.   Then  there is a canonical chain isomorphism 
\begin{eqnarray}\label{eq-modxxx}
\varphi: C_*(\mathcal{K}^\times_g,f,g;\mathbb{R})\longrightarrow  C_*(\mathcal{K}^\times_g,fh,gh;\mathbb{R}) 
\end{eqnarray}
given by (\ref{eq-rep})
which is an isometry with respect to the inner products $\langle~,~\rangle_g$ and $\langle~,~\rangle_{gh}$ respectively. 
\end{corollary}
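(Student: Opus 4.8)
The plan is to deduce the statement by applying Lemma~\ref{le-5.a} verbatim to the sub-simplicial complex $\mathcal{K}^\times_g$ in place of $\mathcal{K}$, and then to upgrade the mere preservation of quadratic forms recorded there into an isometry of honest inner products. The only content beyond Lemma~\ref{le-5.a} is the observation that, once we restrict to $\mathcal{K}^\times_g$, the relevant $g$- and $gh$-weighted quadratic forms are no longer merely semi-positive definite but strictly positive definite.

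First I would note that $\mathcal{K}^\times_g=\{\sigma\in\mathcal{K}\mid g(\sigma)\neq 0\}$ is, by Lemma~\ref{le-2.3y}, a simplicial complex in its own right, with vertex set $V_g=\{v\in V\mid g(v)\neq 0\}$, on which $g$ is non-vanishing. The functions $f,g,h$ restrict to functions on $V_g$, and $h$ remains non-vanishing there. Lemma~\ref{le-5.a}, applied with $\mathcal{K}^\times_g,f,g,h$ in place of $\mathcal{K},f,g,h$, then produces the chain isomorphism $\varphi$ of the form (\ref{eq-rep}); since (\ref{eq-rep}) only rescales each simplex by the reciprocal product of the $h$-values of its vertices, this is precisely the map asserted in the Corollary. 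From Lemma~\ref{le-5.a} it is simultaneously a chain map intertwining $\partial_n^f$ and $\partial_n^{fh}$, a linear isomorphism in each degree, and a map preserving the forms in the sense $\langle\varphi(a),\varphi(b)\rangle_{gh}=\langle a,b\rangle_g$.

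Next I would check that the codomain chain complex $C_*(\mathcal{K}^\times_g,fh,gh;\mathbb{R})$ in (\ref{eq-modxxx}) carries genuine inner products rather than degenerate forms. Since $h$ is non-vanishing on $V$, the product $gh$ vanishes at a vertex exactly when $g$ does; hence $\mathcal{K}^\times_{gh}=\mathcal{K}^\times_g$ and $gh$ is non-vanishing on every vertex of $\mathcal{K}^\times_g$. By the strict positive-definiteness argument already used in the proof of Lemma~\ref{le-2.3y}, both $\langle~,~\rangle_g$ and $\langle~,~\rangle_{gh}$ are therefore inner products on each $C_n(\mathcal{K}^\times_g;\mathbb{R})$. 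Combining this with the form-preservation of $\varphi$ from the previous step shows that $\varphi$ is a degreewise linear isometry of inner-product spaces, which is exactly the additional assertion of the Corollary.

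I do not anticipate any substantive obstacle: the entire argument is a specialization of Lemma~\ref{le-5.a} together with bookkeeping of notation. The single point deserving a sentence of care is the identity $\mathcal{K}^\times_{gh}=\mathcal{K}^\times_g$, which guarantees that the target in (\ref{eq-modxxx}) is well-defined and that $\langle~,~\rangle_{gh}$ is non-degenerate there; everything else is immediate from the two cited lemmas.
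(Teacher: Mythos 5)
Your proposal is correct and takes essentially the same route as the paper: the paper's proof likewise obtains $\varphi$ by applying Lemma~\ref{le-5.a} to the subcomplex $\mathcal{K}^\times_g$ and then notes that, because $g$ is non-vanishing on the vertices of $\mathcal{K}^\times_g$ and $h$ is non-vanishing, both $\langle~,~\rangle_g$ and $\langle~,~\rangle_{gh}$ are genuine inner products on each $C_n(\mathcal{K}^\times_g;\mathbb{R})$. Your additional observation that $\mathcal{K}^\times_{gh}=\mathcal{K}^\times_g$ is sound bookkeeping that the paper leaves implicit.
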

\begin{proof}
Note that $g$  is non-vanishing on the set of the vertices of $\mathcal{K}^\times _g$.  Thus $\langle~,~\rangle_g$  is an inner product on $C_n(\mathcal{K}^\times_g;\mathbb{R})$ for each $n\geq 0$.  Moreover,  since $h$  is non-vanishing,   $\langle~,~\rangle_{gh}$  is an inner product on $C_n(\mathcal{K}^\times_g;\mathbb{R})$  as well for each $n\geq 0$.  
\end{proof}

With the help of Lemma~\ref{le-5.a},   we  have the next theorem.

\begin{theorem}\label{th-isom}
Let $(\mathcal{K},f,g)$  be a vertex-weighted simplicial complex where $f$ and $g$ are any real-valued functions on $V$.   Let $h$  be a non-vanishing real function on $V$.   Then for each $n\geq 0$,  we have a linear isomorphism
\begin{eqnarray}\label{eq-3.19-8}
\varphi_*: H_n(\mathcal{K},f,g;\mathbb{R})\longrightarrow H_n(\mathcal{K},fh,gh;\mathbb{R})
\end{eqnarray}
which preserves the quadratic forms with respect to  $\langle~,~\rangle_g$ and $\langle~,~\rangle_{gh}$.  Here $\varphi_*$  is induced by the chain isomorphism $\varphi$  in Lemma~\ref{le-5.a}.   
\end{theorem}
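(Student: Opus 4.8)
The plan is to lift the chain-level statement of Lemma~\ref{le-5.a} to homology and then invoke the abstract functoriality packaged in Lemma~\ref{le-1.qq}. First I would recall that the map $\varphi$ of Lemma~\ref{le-5.a} is a chain isomorphism, so it commutes with the weighted boundary operators, $\partial_n^{fh}\circ\varphi=\varphi\circ\partial_n^f$ for every $n$. Consequently $\varphi$ carries cycles to cycles and boundaries to boundaries: if $a\in{\rm Ker}\,\partial_n^f$ then $\partial_n^{fh}\varphi(a)=\varphi\partial_n^f(a)=0$, so $\varphi({\rm Ker}\,\partial_n^f)\subseteq{\rm Ker}\,\partial_n^{fh}$; and since $\varphi$ is surjective in each degree, $\varphi({\rm Im}\,\partial_{n+1}^f)={\rm Im}\,\partial_{n+1}^{fh}$. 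Because $\varphi$ is a linear isomorphism in each degree (again Lemma~\ref{le-5.a}), with inverse the analogous chain map built from $h^{-1}$, the first inclusion is in fact an equality. Hence $\varphi$ restricts to a linear isomorphism ${\rm Ker}\,\partial_n^f\longrightarrow{\rm Ker}\,\partial_n^{fh}$ carrying ${\rm Im}\,\partial_{n+1}^f$ onto ${\rm Im}\,\partial_{n+1}^{fh}$, which already yields the induced linear isomorphism $\varphi_*$ of quotient spaces in (\ref{eq-3.19-8}).

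To see that $\varphi_*$ preserves the quadratic forms, I would apply Lemma~\ref{le-1.qq} with the following choices: $W={\rm Ker}\,\partial_n^f$ equipped with $\langle~,~\rangle_g$, $W'={\rm Ker}\,\partial_n^{fh}$ equipped with $\langle~,~\rangle_{gh}$, the map the restriction $\varphi|_W\colon W\longrightarrow W'$, and the distinguished subspaces $U={\rm Im}\,\partial_{n+1}^f\subseteq W$ and $U'={\rm Im}\,\partial_{n+1}^{fh}\subseteq W'$. The isometry hypothesis $\langle\varphi(a),\varphi(b)\rangle_{gh}=\langle a,b\rangle_g$ required by Lemma~\ref{le-1.qq} is exactly the last assertion of Lemma~\ref{le-5.a}, read on the subspace $W$, while the compatibility $\varphi(U)=U'$ is what was established in the previous paragraph. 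Lemma~\ref{le-1.qq} then delivers a linear isomorphism $\varphi_*\colon W/U\longrightarrow W'/U'$ under which the induced (symmetric, semi-positive definite) quadratic forms correspond.

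Finally I would identify the objects: by definition $W/U=H_n(\mathcal{K},f,g;\mathbb{R})$ and $W'/U'=H_n(\mathcal{K},fh,gh;\mathbb{R})$, and the induced forms on these quotients are precisely the forms $\langle~,~\rangle_g$ and $\langle~,~\rangle_{gh}$ constructed in Theorem~\ref{pr-3.19.g}. The one point demanding care is this last identification, namely that the homology form appearing in the conclusion genuinely coincides with the form output by Lemma~\ref{le-1.qq}. This is not an extra argument but a matter of construction: Theorem~\ref{pr-3.19.g} is exactly the application of Lemma~\ref{le-1.q} to the same pair $(W,U)$, and Lemma~\ref{le-1.qq} is built on top of that same splitting construction via (\ref{eq-1.16})--(\ref{eq-1.18}), so the two forms agree by definition. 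Everything else is routine once the chain isomorphism of Lemma~\ref{le-5.a} is in hand, and I expect no obstacle beyond keeping this compatibility straight.
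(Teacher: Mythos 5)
Your proposal is correct and follows essentially the same route as the paper's own proof, which simply invokes Lemma~\ref{le-1.qq} applied to the chain isomorphism $\varphi$ of Lemma~\ref{le-5.a}; you have merely spelled out the routine verifications (that $\varphi$ maps ${\rm Ker}\,\partial_n^f$ onto ${\rm Ker}\,\partial_n^{fh}$ and ${\rm Im}\,\partial_{n+1}^f$ onto ${\rm Im}\,\partial_{n+1}^{fh}$, and that the induced forms coincide with those of Theorem~\ref{pr-3.19.g}) that the paper leaves implicit. In fact your write-up is more careful than the published one-sentence argument, and the compatibility point you flag at the end is handled exactly as you say, by the shared construction in Lemmas~\ref{le-1.q} and~\ref{le-1.qq}.
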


\begin{proof}
Since $\varphi$ in Lemma~\ref{le-5.a} is a chain isomorphism that preserves the quadratic forms,    it follows with the help of  Lemma~\ref{le-1.qq} that  $\varphi_*$ induces a linear isomorphism $\varphi_*$  from the  $n$-th  $(f,g)$-weighted homology group $ H_n(\mathcal{K},f,g;\mathbb{R})$  to  the $n$-th $(fh,gh)$-weighted homology group $H_n(\mathcal{K},fh,gh;\mathbb{R})$ that preserves the quadratic forms on the quotient spaces.  The theorem follows.    
\end{proof}

\begin{remark}
Particularly,  if both $f$  and $g$ are non-vanishing on $V$,  then Theorem~\ref{th-main} implies that the $(f,g)$-weighted homology $H_*(\mathcal{K},f,g;\mathbb{R})$  (as graded vector space with quadratic forms $\langle~,~\rangle_g$) only depends on the ratio function $f/g$  (or $g/f$ equivalently).  
\end{remark}

Similar with Theorem~\ref{th-isom},  the next corollary follows from Corollary~\ref{co-3.25.1}.  

\begin{corollary}\label{co-isom}
Let $(\mathcal{K},f,g)$  be a vertex-weighted simplicial complex.  Then for each $n\geq 0$ and each non-vanishing function $h$ on the set of the vertices,  we  have  a linear isometry
\begin{eqnarray}\label{eq-3.19-8}
\varphi_*: H_n(\mathcal{K}^\times_g,f,g;\mathbb{R})\longrightarrow H_n(\mathcal{K}^\times_g,fh,gh;\mathbb{R}). 
\end{eqnarray}
In particular,  taking $f=g$ and $h=1/f$,   we have for each $n\geq 0$, 
\begin{eqnarray*}
H_n(\mathcal{K}^\times_f,f,f;\mathbb{R})\cong H_n(\mathcal{K}^\times_{\epsilon(f)};\mathbb{R})
\end{eqnarray*}
 as Euclidean spaces. 
\qed 
\end{corollary}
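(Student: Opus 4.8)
The plan is to mimic the proof of Theorem~\ref{th-isom}, but feeding in Corollary~\ref{co-3.25.1} in place of Lemma~\ref{le-5.a}. First I would invoke Corollary~\ref{co-3.25.1} to produce, for the given non-vanishing $h$, the canonical chain isomorphism (\ref{eq-modxxx})
\begin{eqnarray*}
\varphi: C_*(\mathcal{K}^\times_g,f,g;\mathbb{R})\longrightarrow C_*(\mathcal{K}^\times_g,fh,gh;\mathbb{R}),
\end{eqnarray*}
which in each degree is a linear isomorphism satisfying $\langle\varphi(a),\varphi(b)\rangle_{gh}=\langle a,b\rangle_g$. The point that upgrades this to an honest \emph{isometry} is that $g$ is non-vanishing on the vertices of $\mathcal{K}^\times_g$ by the maximality established in Lemma~\ref{le-2.3y}, and $h$ is non-vanishing by hypothesis; hence both $\langle~,~\rangle_g$ and $\langle~,~\rangle_{gh}$ are genuine inner products on every $C_n(\mathcal{K}^\times_g;\mathbb{R})$, exactly as recorded in the proof of Corollary~\ref{co-3.25.1}.

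Next I would push $\varphi$ down to homology. Since $\varphi$ is a chain map, it sends $\mathrm{Ker}(\partial^f_n)$ isomorphically onto $\mathrm{Ker}(\partial^{fh}_n)$ and $\mathrm{Im}(\partial^f_{n+1})$ isomorphically onto $\mathrm{Im}(\partial^{fh}_{n+1})$. I would then apply Lemma~\ref{le-1.qq} with $W=\mathrm{Ker}(\partial^f_n)$ and $U=\mathrm{Im}(\partial^f_{n+1})$, with $W'=\mathrm{Ker}(\partial^{fh}_n)$ and $U'=\mathrm{Im}(\partial^{fh}_{n+1})$, and with the restriction of $\varphi$: because $\varphi(U)=U'$ and $\varphi$ preserves the inner products, Lemma~\ref{le-1.qq} delivers the induced isomorphism $\varphi_*$ of (\ref{eq-3.19-8}) preserving the quotient inner products, which is precisely the claimed isometry $H_n(\mathcal{K}^\times_g,f,g;\mathbb{R})\to H_n(\mathcal{K}^\times_g,fh,gh;\mathbb{R})$.

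For the last assertion I would specialize to $f=g$ and $h=1/f$. This $h$ is admissible because $f$ is non-vanishing on the vertices of $\mathcal{K}^\times_f$, so $1/f$ is well-defined and non-vanishing there. Under this choice $fh=gh=1$ on every vertex of $\mathcal{K}^\times_f$, so $\partial^{fh}_n$ reduces to the ordinary simplicial boundary operator and $\langle~,~\rangle_{gh}$ reduces to the standard inner product; the target complex is therefore the usual chain complex of $\mathcal{K}^\times_f$ with its standard Euclidean structure. The first part then yields $H_n(\mathcal{K}^\times_f,f,f;\mathbb{R})\cong H_n(\mathcal{K}^\times_f;\mathbb{R})$ as Euclidean spaces, and I would finish by noting that $\mathcal{K}^\times_f=\mathcal{K}^\times_{\epsilon(f)}$, since $f(v)=0$ if and only if $\epsilon(f)(v)=0$ forces the two maximal sub-complexes to share the same vertex set and hence to coincide.

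All of this is routine once the earlier results are quoted; the only step needing genuine care is confirming that the semi-positive definite forms are in fact inner products before appealing to Lemma~\ref{le-1.qq}, and this is exactly why the argument must be run on $\mathcal{K}^\times_g$ rather than on $\mathcal{K}$. I therefore anticipate no serious obstacle.
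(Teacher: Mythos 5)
Your proposal is correct and takes essentially the same route the paper indicates: the paper proves this corollary by citing Corollary~\ref{co-3.25.1} together with the argument of Theorem~\ref{th-isom} (i.e., Lemma~\ref{le-1.qq} applied to $W=\mathrm{Ker}\,\partial^f_n$ and $U=\mathrm{Im}\,\partial^f_{n+1}$), which is exactly your plan. Your added observations---that both forms are genuine inner products on $C_n(\mathcal{K}^\times_g;\mathbb{R})$ so Lemma~\ref{le-1.qq} yields an isometry of Euclidean spaces, that $h=1/f$ is admissible on the vertices of $\mathcal{K}^\times_f$, and that $\mathcal{K}^\times_f=\mathcal{K}^\times_{\epsilon(f)}$---are precisely the details the paper leaves implicit.
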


\smallskip

\section{Weighted Analytic Torsions for Weighted Simplicial Complexes}
\label{s4}

Throughout this section,  we
let $(\mathcal{K},f,g)$  be a vertex-weighted simplicial complex where  $\mathcal{K}$  is  a simplicial complex  and  $f$  and $g$ are arbitrary real-valued functions on the set $V$ of the vertices of $\mathcal{K}$.

For each $n\geq 0$,  we let $\langle~,~\rangle_g$  be the  inner product on $C_n(\mathcal{K};\mathbb{R})/N_n(\mathcal{K},g;\mathbb{R})$ induced by $g$.  
Let 
\begin{eqnarray*}
(\tilde \partial^{f}_n)^*_g: C_{n-1}(\mathcal{K};\mathbb{R})/N_n(\mathcal{K},g;\mathbb{R})\longrightarrow C_n(\mathcal{K};\mathbb{R})/N_n(\mathcal{K},g;\mathbb{R})
\end{eqnarray*}
  be the adjoint linear map of $\tilde \partial_n^f$ (which is given in (\ref{eq-3.22-1}) and (\ref{eq-3.22-2})) with respect to the inner product $\langle~,~\rangle_g$ such that 
\begin{eqnarray*}
\langle\tilde \partial^f_n (a),b \rangle_g= \langle a,(\tilde \partial^{f}_n)^*_g (b)\rangle_g
\end{eqnarray*}
for any $n$-chains $a$ and $b$ in $\tilde C_*(\mathcal{K},f,g;\mathbb{R})$.  The $n$-th $(f,g)$-weighted (reduced) Hodge-Laplace operator (on the quotient chain complex) is the linear map
\begin{eqnarray*}
\tilde \Delta_n^{f,g}:  \tilde C_n(\mathcal{K};\mathbb{R})\longrightarrow \tilde C_n(\mathcal{K};\mathbb{R})
\end{eqnarray*}
given by 
\begin{eqnarray*}
\tilde\Delta_n^{f,g}(c)=(\tilde \partial^{f}_n)^*_g\tilde \partial^{f}_n  (c)+  (\tilde \partial^{f}_{n+1})^*_g\tilde \partial^{f}_{n+1}(c)  
\end{eqnarray*}
for any $n$-chains $c$ in $\tilde C_*(\mathcal{K},f,g;\mathbb{R})$.

On the other hand, for each $n\geq 0$,  we also use $\langle~,~\rangle_g$  to denote the inner product on $C_n(\mathcal{K}^\times_g;\mathbb{R})$  with respect to $g$.   Let 
\begin{eqnarray*}
(\partial^{f}_n\mid_{\mathcal{K}^\times _g})^*_g:C_{n-1}(\mathcal{K}^\times_g;\mathbb{R})\longrightarrow C_n(\mathcal{K}^\times_g;\mathbb{R})
\end{eqnarray*}
  be the adjoint linear map of $ \partial_n^f\mid_{\mathcal{K}^\times _g}$ (which is given in (\ref{eq-3.22-3})) with respect to the inner products $\langle~,~\rangle_g$   such that 
\begin{eqnarray*}
\langle \partial^f_n (a),b \rangle_g= \langle a,(\partial^{f}_n\mid_{\mathcal{K}^\times_g})^*_g (b)\rangle_g
\end{eqnarray*}
for any $n$-chains $a$ and $b$ in $C_*(\mathcal{K}^\times_g,f,g;\mathbb{R})$.  The $n$-th $(f,g)$-weighted Hodge-Laplace operator of $\mathcal{K}^\times_g$ is the linear map
\begin{eqnarray*}
\Delta_n^{f,g}:  C_n(\mathcal{K}^\times_g;\mathbb{R})\longrightarrow  C_n(\mathcal{K}^\times_g;\mathbb{R})
\end{eqnarray*}
given by 
\begin{eqnarray*}
\Delta_n^{f,g}(c)=( \partial^{f}_n\mid_{\mathcal{K}^\times _g})^*_g \partial^{f}_n  (c)+  (  \partial^{f}_{n+1}\mid_{\mathcal{K}^\times _g})^*_g \partial^{f}_{n+1}(c)  
\end{eqnarray*}
for any $n$-chains $c$ in $C_*(\mathcal{K}^\times_g,f,g;\mathbb{R})$.

The reduced Hodge-Laplace operator $\Delta_n^{f,g}$ and the Hodge Laplace  operator $\Delta_n^{f,g}$  can be identified via the isomorphism $\iota_n$.  In fact,  it follows from Proposition~\ref{pr-zzz} and \cite[Lemma~2.2]{dig1}  that  for each $n\geq 0$,  
\begin{eqnarray*}
\Delta_n^{f,g}=\iota_n\circ \tilde\Delta_n^{f,g}\circ  (\iota_n)^{-1}. 
\end{eqnarray*}
Since $\iota_n$  is an isometry,  it follows that counted with multiplicities,  the multi-set of the eigenvalues of $\Delta_n^{f,g}$ and the multi-set of the eigenvalues of $\tilde\Delta_n^{f,g}$ are equal.  For simplicity,  we do not distinguish $\tilde\Delta_n^{f,g}$  and $\Delta_n^{f,g}$.  We define the $n$-th {\bf $(f,g)$-weighted Hodge-Laplace operator}   of the vertex-weighted simplicial complex $(\mathcal{K},f,g)$  as 
$\Delta_n^{f,g}$.

We observe that  $\Delta_n^{f,g}$  is symmetric and semi-positive definite.  Hence its matrix representatives are diagonalizable and its eigenvalues are non-negative.  We use $\lambda$ to denote any eigenvalue of $\Delta_n^{f,g}$.  The zeta function $\zeta_n(s)$  is defined by
\begin{eqnarray*}
\zeta_n(s)=\sum_{\lambda>0} \frac{1}{\lambda^s}. 
\end{eqnarray*}
The $(f,g)$-weighted analytic torsion of  $(\mathcal{K},f,g)$  is given by 
\begin{eqnarray}\label{eq-t-def}
\log T(\mathcal{K},f,g)= \frac{1}{2}\sum _{n=0}^N (-1)^n n \zeta'(0),
\end{eqnarray}
where $N$ is the dimension of $\mathcal{K}$.   The next proposition follows from Lemma~\ref{le-5.a}  immediately. 

\begin{proposition}\label{pr-6.f}
For any simplicial complex $\mathcal{K}$, any real-valued weights $f$ and $g$   on the set $V$,  and any non-vanishing real function $h$ on $V$,   we  have 
\begin{eqnarray*}
T(\mathcal{K},hf,hg)=T(\mathcal{K},f,g). 
\end{eqnarray*}
\end{proposition}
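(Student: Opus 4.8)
The plan is to exploit the isometric chain isomorphism furnished by Lemma~\ref{le-5.a}, together with its restriction in Corollary~\ref{co-3.25.1}, and to show that it conjugates the weighted Hodge-Laplace operators; once the operators are conjugate, their spectra coincide, the zeta functions agree, and the torsions are forced to be equal. First I would note that since $h$ is non-vanishing on $V$, a simplex $\sigma$ satisfies $g(\sigma)\neq 0$ if and only if $(gh)(\sigma)\neq 0$, so that $\mathcal{K}^\times_g=\mathcal{K}^\times_{gh}$ and also $N_n(\mathcal{K},g;\mathbb{R})=N_n(\mathcal{K},gh;\mathbb{R})$ for every $n$. Hence both torsions $T(\mathcal{K},f,g)$ and $T(\mathcal{K},fh,gh)$ are computed from operators living on the \emph{same} sub-simplicial complex $\mathcal{K}^\times_g$, where $\langle~,~\rangle_g$ and $\langle~,~\rangle_{gh}$ are both genuine inner products.

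Next I would invoke Corollary~\ref{co-3.25.1} to obtain the chain isomorphism $\varphi:C_*(\mathcal{K}^\times_g,f,g;\mathbb{R})\to C_*(\mathcal{K}^\times_g,fh,gh;\mathbb{R})$ which intertwines the boundary operators, $\partial_n^{fh}\circ\varphi=\varphi\circ\partial_n^f$, and is an isometry, $\langle\varphi(a),\varphi(b)\rangle_{gh}=\langle a,b\rangle_g$. The central step is to combine these two facts into the adjoint-intertwining identity $(\partial_n^{fh})^*_{gh}=\varphi\circ(\partial_n^f)^*_g\circ\varphi^{-1}$. This is the short computation one expects: writing each argument as $\varphi$ applied to a chain, pushing the isometry through the defining relation $\langle\partial_n^{fh}(a),b\rangle_{gh}=\langle a,(\partial_n^{fh})^*_{gh}(b)\rangle_{gh}$, and using the chain-map property reduces the adjoint for $(fh,gh)$ to the one for $(f,g)$. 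Feeding this into the definition of the reduced Hodge-Laplace operators then gives $\Delta_n^{fh,gh}=\varphi\circ\Delta_n^{f,g}\circ\varphi^{-1}$ for every $n\geq 0$.

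Finally, I would conclude by spectral invariance under conjugation. Since $\varphi$ is a linear isomorphism of $C_n(\mathcal{K}^\times_g;\mathbb{R})$ and $\Delta_n^{fh,gh}$ is similar to $\Delta_n^{f,g}$, the two share the same multi-set of eigenvalues, in particular the same positive eigenvalues counted with multiplicity. Therefore the zeta functions $\zeta_n(s)=\sum_{\lambda>0}\lambda^{-s}$ coincide for every $n$, so $\zeta_n'(0)$ coincide, and substituting into the defining formula (\ref{eq-t-def}) yields $\log T(\mathcal{K},fh,gh)=\log T(\mathcal{K},f,g)$, that is $T(\mathcal{K},hf,hg)=T(\mathcal{K},f,g)$.

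The only genuine content is the adjoint-intertwining identity of the middle paragraph; the rest is bookkeeping. The one subtlety to be careful about is that the torsion is defined through the reduced operators, so I should either first identify $\tilde\Delta_n^{f,g}$ with $\Delta_n^{f,g}$ on $\mathcal{K}^\times_g$ via the isometry $\iota_n$ (exactly as in the paragraph preceding the definition of $\Delta_n^{f,g}$, using Proposition~\ref{pr-zzz}), or run the whole argument on the quotient complexes $\tilde C_*$ directly; since $\iota_n$ is a spectrum-preserving isometry and the null spaces agree, the two routes are equivalent.
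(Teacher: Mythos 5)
Your proof is correct and follows essentially the same route as the paper: the paper's own proof likewise applies the isometric chain isomorphism $\varphi$ of Lemma~\ref{le-5.a} to conclude $\Delta_n^{f,g}=\varphi^{-1}\circ \Delta_n^{hf,hg}\circ \varphi$ (citing \cite[Lemma~2.2]{dig1} for exactly the adjoint-intertwining step you carry out by hand), whence the spectra and therefore the torsions coincide. Your preliminary check that $\mathcal{K}^\times_g=\mathcal{K}^\times_{gh}$ and $N_n(\mathcal{K},g;\mathbb{R})=N_n(\mathcal{K},gh;\mathbb{R})$, ensuring the operators live on the same space with genuine inner products, makes explicit a point the paper leaves implicit but does not alter the argument.
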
 

\begin{proof}
By Lemma~\ref{le-5.a}  and \cite[Lemma~2.2]{dig1},  we have 
\begin{eqnarray*}
\Delta_n^{f,g}=\varphi^{-1}\circ \Delta_n^{hf,hg}\circ \varphi
\end{eqnarray*}
whiich  implies that 
all the eigenvalues (counted with multiplicities)  of   $\Delta_n^{f,g}$  for $\mathcal{K}_n^{hf,hg}$ are equal.  By the definition (\ref{eq-t-def}) of the weighted analytic torsions,  the proposition follows.  
\end{proof}

We fix $f$ and multiply $g$ by a non-zero scalar $c$.  By applying \cite[Corollary~3.8]{lin1} to the R-torsion,  we  have

\begin{proposition}\label{pr-3.v}
Let $\mathcal{K}$ be a simplicial complex of dimension $N$ with the set of   its  vertices  $V$.     Let $f$ and $g$ be two real-valued weights on $V$ such that for any $v\in V$,  $g(v)\neq 0$  implies $f(v)\neq 0$.   Let $c\neq 0$  be a real constant.  Then 
\begin{eqnarray}\label{eq-8.x}
T(\mathcal{K},f,cg)=t(\mathcal{K},c,\epsilon(g))T(\mathcal{K},f,g)
\end{eqnarray}
where  $t(\mathcal{K},c,\epsilon(g))=|c|^{s(\mathcal{K},\epsilon(g))}$  and  
\begin{eqnarray*}
s(\mathcal{K},\epsilon(g))=\sum_{n=0}^N(-1)^n \dim \partial_n C_n(\mathcal{K}^\times_{\epsilon(g)};\mathbb{R}).
\end{eqnarray*}
\end{proposition}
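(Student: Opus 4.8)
The plan is to work entirely on the sub-complex $\mathcal{K}^\times_g$ and to track how the substitution $g\mapsto cg$ propagates through the inner products, the adjoint operators, and finally the spectrum of the weighted Hodge--Laplace operator. The hypothesis that $g(v)\neq 0$ implies $f(v)\neq 0$ guarantees that both $f$ and $g$ are non-vanishing on the vertices of $\mathcal{K}^\times_g$, so that $\langle~,~\rangle_g$ is a genuine inner product there; by the identification of $\Delta_n^{f,g}$ with the reduced operator $\tilde\Delta_n^{f,g}$ (Section~\ref{s4}) the torsion $T(\mathcal{K},f,g)$ is computed from the eigenvalues of $\Delta_n^{f,g}$ acting on $C_n(\mathcal{K}^\times_g;\mathbb{R})$. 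I also record that $\mathcal{K}^\times_{\epsilon(g)}=\mathcal{K}^\times_g$ and that, because $f$ is non-vanishing on $\mathcal{K}^\times_g$, the weighted boundary $\partial_n^f$ there is conjugate by a diagonal map to the ordinary boundary, so that $\dim\partial_n^f C_n(\mathcal{K}^\times_g;\mathbb{R})$ coincides with the rank $r_n:=\dim\partial_n C_n(\mathcal{K}^\times_{\epsilon(g)};\mathbb{R})$ appearing in $s(\mathcal{K},\epsilon(g))$.

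First I would compute the effect of the scaling on the inner product. For an $n$-simplex $\sigma$ one has $(cg)(\sigma)=c^{n+1}g(\sigma)$, hence $\langle~,~\rangle_{cg}=c^{2(n+1)}\langle~,~\rangle_g$ on $C_n(\mathcal{K}^\times_g;\mathbb{R})$. Writing $\alpha_n=c^{2(n+1)}$ and using the defining relation of the adjoint, a short computation gives $(\partial_n^f)^{*}_{cg}=(\alpha_{n-1}/\alpha_n)(\partial_n^f)^{*}_{g}$. The step I expect to be decisive is the observation that the ratio $\alpha_{n-1}/\alpha_n=c^{-2}$ is \emph{independent of $n$}: the scaling is uniform across all degrees, so the whole operator scales by a single constant, $\Delta_n^{f,cg}=c^{-2}\Delta_n^{f,g}$. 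Consequently $\Delta_n^{f,cg}$ has the same kernel as $\Delta_n^{f,g}$, and its non-zero eigenvalues are exactly $|c|^{-2}\lambda$ as $\lambda$ ranges over the non-zero eigenvalues of $\Delta_n^{f,g}$, with the same multiplicities.

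From this eigenvalue relation I would pass to the zeta functions. Since $\zeta_n(s)$ is a finite sum over positive eigenvalues, one gets $\zeta_n^{cg}(s)=|c|^{2s}\zeta_n^{g}(s)$, and differentiating at $s=0$ yields $(\zeta_n^{cg})'(0)=(\zeta_n^{g})'(0)+2\log|c|\,\zeta_n^{g}(0)$. The anomaly term is governed by $\zeta_n^{g}(0)$, which in this finite-dimensional setting equals the number of positive eigenvalues of $\Delta_n^{f,g}$; by the Hodge decomposition of $C_n(\mathcal{K}^\times_g;\mathbb{R})$ this number is $\dim C_n(\mathcal{K}^\times_g;\mathbb{R})-\dim\ker\Delta_n^{f,g}=r_n+r_{n+1}$. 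Substituting into the definition (\ref{eq-t-def}), the derivative terms reassemble $\log T(\mathcal{K},f,g)$ and the anomaly contributes $\log|c|\sum_{n}(-1)^n n\,(r_n+r_{n+1})$.

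The remaining input is the purely combinatorial identity $\sum_{n}(-1)^n n\,(r_n+r_{n+1})=\sum_{n}(-1)^n r_n=s(\mathcal{K},\epsilon(g))$, obtained by reindexing the $r_{n+1}$-sum and telescoping (all boundary ranks outside the range vanish). This produces $\log T(\mathcal{K},f,cg)=\log T(\mathcal{K},f,g)+s(\mathcal{K},\epsilon(g))\log|c|$, which is the asserted formula with $t(\mathcal{K},c,\epsilon(g))=|c|^{s(\mathcal{K},\epsilon(g))}$. As an alternative to the last two paragraphs, once the uniform scaling $\Delta_n^{f,cg}=c^{-2}\Delta_n^{f,g}$ has been established one may instead invoke \cite[Corollary~3.8]{lin1}, which records precisely this behavior of the torsion under a global rescaling of the inner products; in either route the genuine work is the verification of the uniform scaling together with the bookkeeping of $\zeta_n(0)$, which is where the exponent $s(\mathcal{K},\epsilon(g))$ is produced.
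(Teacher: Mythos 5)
Your proof is correct, and it works, but it takes a genuinely different route from the paper's. The paper proves this proposition by citation: after the same key observation $\langle a,b\rangle_{cg}=c^{2(n+1)}\langle a,b\rangle_g$ on $C_n(\mathcal{K}^\times_g;\mathbb{R})$, it invokes \cite[Corollary~3.8]{lin1} to read off the anomaly exponent $\sum_n(-1)^n(n+1)\bigl(\dim C_n(\mathcal{K}^\times_g;\mathbb{R})-\dim H_n(\mathcal{K}^\times_g,f,g;\mathbb{R})\bigr)$, uses Theorem~\ref{th-isom} with the ratio $g/f$ --- this is where the hypothesis that $g(v)\neq 0$ implies $f(v)\neq 0$ enters --- to replace the weighted homology dimension by the unweighted Betti number of $\mathcal{K}^\times_g$, trades $(n+1)$ for $n$ via the Euler characteristic identity, and converts the result into $s(\mathcal{K},\epsilon(g))$ by \cite[Lemma~2.1]{dig1}. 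You instead bypass the torsion-rescaling corollary: you derive the uniform scaling $(\partial_n^f)^*_{cg}=c^{-2}(\partial_n^f)^*_g$, hence $\Delta_n^{f,cg}=c^{-2}\Delta_n^{f,g}$, compute the zeta anomaly $2\log|c|\,\zeta_n(0)$ with $\zeta_n(0)=\dim C_n-\dim\ker\Delta_n^{f,g}=r_n+r_{n+1}$ by the Hodge decomposition, and verify the telescoping identity $\sum_n(-1)^n n\,(r_n+r_{n+1})=\sum_n(-1)^n r_n$ by hand; for you the hypothesis on $f$ enters through the diagonal conjugacy $\partial_n^f=D^{-1}\partial_n D$ on $\mathcal{K}^\times_g$, identifying the weighted boundary ranks with the unweighted ones in $s(\mathcal{K},\epsilon(g))$, rather than through Theorem~\ref{th-isom}. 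Your computation is in fact the exact analogue of the paper's own proof of the companion Proposition~\ref{pr-add} (the $cf$ case, argued via \cite[(3.21)]{lin1}), so your approach handles both scaling propositions uniformly and self-containedly, at the cost of redoing spectrally what \cite[Corollary~3.8]{lin1} packages on the R-torsion side; your closing remark correctly identifies that citation as the shortcut the paper actually takes.
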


\begin{proof}
Since for any $v\in V$,  $g(v)\neq 0$  implies $f(v)\neq 0$,  we have 
that both $f$  and $g$  are  non-vanishing on the set of the vertices of $\mathcal{K}^\times _g$.  Hence the ratio function $g/f$ is  well-defined on $\mathcal{K}^\times_g$.  Consequently,  with the help of   Theorem~\ref{th-isom},  $H_n(\mathcal{K}^\times _g,f,g;\mathbb{R})$  is linearly isometric   to $H_n(\mathcal{K}^\times _g,1,g/f;\mathbb{R})$,  which is linearly isomorphic to $H_n(\mathcal{K}^\times_g;\mathbb{R})$. 
 For any $n\geq 0$ and any   $a,b\in C_n(\mathcal{K}^\times_g;\mathbb{R})$, we have
\begin{eqnarray*}
\langle a,b\rangle_{cg}=c^{2(n+1)}\langle  a,b\rangle_g.   
\end{eqnarray*}
 Thus  by  \cite[Corollary~3.8]{lin1},  we have (\ref{eq-8.x}) where 
\begin{eqnarray}
t(\mathcal{K},c,\epsilon(g))&=&\prod_{n= 0}^N |c|^{(-1)^{n} (n+1) \big(\dim C_n(\mathcal{K}^\times_g;\mathbb{R})-\dim H_n(\mathcal{K}^\times_g,f,g;\mathbb{R})\big)}\nonumber\\
&=&|c|^{\sum_{n=0}^N  (-1)^n (n+1) \big(\dim C_n(\mathcal{K}^\times_g;\mathbb{R})-\dim H_n(\mathcal{K}^\times_g;\mathbb{R})\big)}\nonumber\\
&=&|c|^{\sum_{n=0}^N  (-1)^n n \big(\dim C_n(\mathcal{K}^\times_g;\mathbb{R})-\dim H_n(\mathcal{K}^\times_g;\mathbb{R})\big)}. \label{eq-revise}
\end{eqnarray}  
The last equality of (\ref{eq-revise}) follows from   the formulas for  Euler characteristic number 
\begin{eqnarray*}
\chi(\mathcal{K}^\times_g)=\sum_{n=0}^N  (-1)^n   \dim C_n(\mathcal{K}^\times_g;\mathbb{R})=\sum_{n=0}^N  (-1)^n \dim H_n(\mathcal{K}^\times_g;\mathbb{R}). 
\end{eqnarray*}
Therefore, with the help of \cite[Lemma~2.1]{dig1},  the right-hand side of  the last equality in (\ref{eq-revise})  equals to $|c|^{s(\mathcal{K},\epsilon(g))}$.  
\end{proof}

We  fix $g$  and multiply $f$  by a non-zero scalar $c$.  By a straight-forward calculation of the analytic torsion,  we  have 

\begin{proposition}\label{pr-add}
Let $\mathcal{K}$ be a simplicial complex of dimension $N$ with the set of   its  vertices  $V$.     Let $f$ and $g$ be a real-valued weights on $V$ such that for any $v\in V$, $g(v)\neq 0$  implies $f(v)\neq 0$.   Let $c\neq 0$  be a real constant.  Then 
\begin{eqnarray}\label{eq-8.y}
T(\mathcal{K},cf,g)=t(\mathcal{K},c,\epsilon(g))^{-1}T(\mathcal{K},f,g). 
\end{eqnarray}
\end{proposition}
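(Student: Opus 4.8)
The plan is to exploit the fact that the weighted boundary operator is \emph{linear} in its twisting weight $f$, whereas in Proposition~\ref{pr-3.v} it was the quadratic form that scaled. First I would read off from (\ref{eq-wbo}) that replacing $f$ by $cf$ pulls a single factor of $c$ out of every summand, so that $\partial_n^{cf}=c\,\partial_n^f$ exactly, and likewise after restriction to $\mathcal{K}^\times_g$. Because the inner product $\langle~,~\rangle_g$ depends only on $g$ and is therefore untouched, the adjoint scales the same way, $(\partial_n^{cf})^*_g=c\,(\partial_n^f)^*_g$, and hence the reduced Hodge--Laplace operator on $C_n(\mathcal{K}^\times_g;\mathbb{R})$ obeys
\begin{eqnarray*}
\Delta_n^{cf,g}=c^2\,\Delta_n^{f,g}.
\end{eqnarray*}
Since $c\neq 0$, this rescales every positive eigenvalue $\lambda$ of $\Delta_n^{f,g}$ to $c^2\lambda$ while leaving the kernel (the harmonic space) fixed.

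From this eigenvalue scaling the zeta functions satisfy $\zeta_n^{cf,g}(s)=|c|^{-2s}\,\zeta_n^{f,g}(s)$, so differentiating and setting $s=0$ gives $(\zeta_n^{cf,g})'(0)=(\zeta_n^{f,g})'(0)-2\log|c|\,\zeta_n^{f,g}(0)$. Substituting into the definition (\ref{eq-t-def}) of the analytic torsion, the first term reproduces $\log T(\mathcal{K},f,g)$ and leaves
\begin{eqnarray*}
\log T(\mathcal{K},cf,g)=\log T(\mathcal{K},f,g)-\log|c|\sum_{n=0}^N(-1)^n n\,\zeta_n^{f,g}(0).
\end{eqnarray*}

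The crux is then to identify $\sum_n(-1)^n n\,\zeta_n^{f,g}(0)$ with $s(\mathcal{K},\epsilon(g))$. Here I would use that $\zeta_n^{f,g}(0)$ counts the positive eigenvalues of $\Delta_n^{f,g}$, which by the Hodge decomposition equals $\dim\partial_n C_n(\mathcal{K}^\times_g;\mathbb{R})+\dim\partial_{n+1}C_{n+1}(\mathcal{K}^\times_g;\mathbb{R})$; an index shift (Abel summation), together with $\partial_0=0$ and the vanishing of $(N+1)$-chains, collapses the alternating sum to $\sum_n(-1)^n\dim\partial_n C_n(\mathcal{K}^\times_g;\mathbb{R})=s(\mathcal{K},\epsilon(g))$. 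This is exactly the bookkeeping already performed in the proof of Proposition~\ref{pr-3.v} via \cite[Lemma~2.1]{dig1} and the Euler characteristic identity, so I would invoke it verbatim, noting that on $\mathcal{K}^\times_g$ the hypothesis forces $f$ to be non-vanishing, so the weighted and unweighted boundaries have equal rank. Exponentiating yields $T(\mathcal{K},cf,g)=|c|^{-s(\mathcal{K},\epsilon(g))}T(\mathcal{K},f,g)=t(\mathcal{K},c,\epsilon(g))^{-1}T(\mathcal{K},f,g)$. I expect this combinatorial identity to be the only real obstacle; the operator scaling and the zeta computation are immediate.

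Finally, I would record a shorter route that bypasses the eigenvalue analysis altogether: writing $(cf,g)=(c\cdot f,\;c\cdot(g/c))$ and applying Proposition~\ref{pr-6.f} with the constant non-vanishing function $h=c$ gives $T(\mathcal{K},cf,g)=T(\mathcal{K},f,g/c)$, after which Proposition~\ref{pr-3.v} applied with the scalar $1/c$ yields
\begin{eqnarray*}
T(\mathcal{K},f,\tfrac{1}{c}g)=|1/c|^{s(\mathcal{K},\epsilon(g))}T(\mathcal{K},f,g)=t(\mathcal{K},c,\epsilon(g))^{-1}T(\mathcal{K},f,g),
\end{eqnarray*}
establishing (\ref{eq-8.y}) in two lines using only results already proved.
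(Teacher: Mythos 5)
Your main argument is correct and is essentially the paper's own proof: the paper likewise observes $\partial_n^{cf}=c\,\partial_n^f$ and $(\partial_{n+1}^{cf}\mid_{\mathcal{K}^\times_g})^*=c\,(\partial_{n+1}^f\mid_{\mathcal{K}^\times_g})^*$, deduces $\Delta_n^{cf,g}=c^2\Delta_n^{f,g}$, and extracts the exponent from the $s=0$ behaviour of the zeta functions via \cite[(3.21)]{lin1}. The only cosmetic difference lies in the bookkeeping for $\sum_{n}(-1)^n n\,\zeta_n(0)$: you count the positive eigenvalues through the Hodge decomposition as $\dim\partial^f_nC_n+\dim\partial^f_{n+1}C_{n+1}$ and collapse the alternating sum by an index shift, while the paper counts them as $\dim C_n(\mathcal{K}^\times_g;\mathbb{R})-\dim H_n(\mathcal{K}^\times_g;\mathbb{R})$ and reuses the reduction from Proposition~\ref{pr-3.v} together with \cite[Lemma~2.1]{dig1}; these are the same identity, and your remark that the hypothesis forces $f$ to be non-vanishing on $\mathcal{K}^\times_g$, so that $\partial_n^f$ and the unweighted $\partial_n$ have equal rank, is precisely the point needed to land on $s(\mathcal{K},\epsilon(g))$. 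Your closing two-line route is a genuinely different derivation that the paper does not take: since the constant function $h\equiv c$ is non-vanishing, Proposition~\ref{pr-6.f} gives $T(\mathcal{K},cf,g)=T(\mathcal{K},f,g/c)$, and Proposition~\ref{pr-3.v} applied with the scalar $1/c$ (whose hypothesis is unchanged, as $g$ and hence $\epsilon(g)$ are untouched and $\mathcal{K}^\times_{g/c}=\mathcal{K}^\times_g$) yields $|1/c|^{s(\mathcal{K},\epsilon(g))}T(\mathcal{K},f,g)=t(\mathcal{K},c,\epsilon(g))^{-1}T(\mathcal{K},f,g)$, i.e.\ (\ref{eq-8.y}). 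This observation shows Proposition~\ref{pr-add} is a formal consequence of the two earlier propositions, which is shorter and less error-prone; the paper's direct Laplacian computation has the mild compensating virtue of being self-contained at this point, not routing the statement through Proposition~\ref{pr-3.v} and hence through \cite[Corollary~3.8]{lin1}.
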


\begin{proof}
The proof is an analogue of the proof for \cite[Proposition~3.7]{dig1}.  Let $n\geq 0$. 
It follows directly that 
\begin{eqnarray*}
\partial_n^{cf}=c\partial_n^f,~~~
(\partial_{n+1}^{cf}\mid_{\mathcal{K}^\times_g})^* =c (\partial_{n+1}^f\mid_{\mathcal{K}^\times_g})^*. 
\end{eqnarray*}
Thus we have 
\begin{eqnarray*}
\Delta_n^{cf,g}=c^2\Delta_n^{f,g}.  
\end{eqnarray*}
Consequently,  with the help of \cite[(3.21)]{lin1},     
\begin{eqnarray*}
\log T(\mathcal{K},cf, g)
&=&\frac{1}{2}\sum_{n=0}^N(-1)^n n 
\big(-\sum_{\lambda_i>0}\log (\lambda_i)
\big)\\
&&-\frac{1}{2}\sum_{n=0}^N(-1)^n n (2\log |c|)\big(\dim C_n(\mathcal{K}_g^\times;\mathbb{R})-\dim H_n(\mathcal{K}_g^\times;\mathbb{R})\big)\\
&=&\frac{1}{2}\sum_{n=0}^N(-1)^n n \frac{d}{ds}\Big| _{s=0}\Big(\sum_{\lambda_i>0}\frac{1}{\lambda_i^s}\Big)
-(\log |c|)s(\mathcal{K},\epsilon(g))\\
&=& \log T(\mathcal{K},f,g)-(\log |c|)s(\mathcal{K},\epsilon(g)). 
\end{eqnarray*}
Taking the exponential map on both sides of the  equations,  we have 
(\ref{eq-8.y}).     
\end{proof}

Finally,  summarizing  Proposition~\ref{pr-3.v}  and Proposition~\ref{pr-add},   we obtain  the proof of Theorem~\ref{th-main}. 

\begin{proof}[Proof of Theorem~\ref{th-main}]
Given a weighted simplicial complex $(\mathcal{K},f,g)$, We  consider  the new weighted simplicial complex $(\mathcal{K},f',g')$  where $f'=\epsilon(g) f$  and $g'=\epsilon(f) g$.  Then we  have 
\begin{eqnarray*}
\epsilon(f')=\epsilon(g')=\epsilon (f)\epsilon(g). 
\end{eqnarray*} 
Applying Proposition~\ref{pr-6.f}  to $(\mathcal{K},f'g')$,  we obtain Theorem~\ref{th-main}~(i).  Applying Proposition~\ref{pr-3.v}  and Proposition~\ref{pr-add} to $(\mathcal{K},f',g')$,  we obtain Theorem~\ref{th-main}~(ii).  
\end{proof}

\smallskip

For any chain complex  $\{C_n\}_{n=0}^N$ where each $C_n$ has an inner product,  the analytic torsion can be obtained  by calculating  the R-torsion (cf. \cite[Section~3]{lin1}).  A complete construction of the R-torsion has been given in \cite[Section~3]{lin1} and reviewed in \cite[Section~2]{dig1} hence it will not be  restated here.   Particularly, our weighted analytic torsions for  weighted simplicial complexes   can be obtained  by  calculating the corresponding  R-torsions.  In the remaining part of this paper, 
we give some examples  of weighted analytic torsions for weighted simplicial complexes by calculating the R-torsions.  The calculations here are similar with \cite[Section~4]{dig1}.  

\begin{example}\label{ex-1}
Let $V=\{v_0,v_1,v_2,v_3\}$. Let $f$  and $g$ be two real-valued non-vanishing functions on $V$ such that (i). $f$ is non-vanishing on $V$; (ii).  $g(v_0),g(v_1),g(v_2)\neq 0$ and  $g(v_3)=0$.  In the following, we use  $v_0$ to denote a $0$-simplex $\{v_0\}$, use $v_0v_1$ to denote a $1$-simplex $\{v_0,v_1\}$, etc. 
\begin{enumerate}[(1).]
\item
 Let 
\begin{eqnarray*}
\mathcal{K}=\{v_0,v_1,v_2,v_0v_1,v_1v_2,v_0v_2,v_0v_1v_2,v_0v_3,
v_1v_3,v_2v_3\}
\end{eqnarray*}
   Then 
\begin{eqnarray*}
\mathcal{K}^\times_{\epsilon(f)\epsilon(g)}=\mathcal{K}^\times_g=\{v_0,v_1,
v_2,v_0v_1,v_1v_2,v_0v_2,v_0v_1v_2\}
\end{eqnarray*}  
is a  solid triangle. 
Similar with the calculation in \cite[Example~3.10]{lin1}  and  \cite[Example~4.2]{dig1},  
we have 
\begin{eqnarray*}
T(\mathcal{K},f,g)=T(\mathcal{K}^\times_g,f,g)=\sqrt{\sum_{i=0}^2\frac{f(v_i)^2}{g(v_i)^2}}. 
\end{eqnarray*}
\item
Let
\begin{eqnarray*}
\mathcal{L}=\{v_0,v_1,v_2,v_0v_1,v_1v_2,v_0v_2,v_0v_3,
v_1v_3,v_2v_3\}. 
\end{eqnarray*}
  Then 
\begin{eqnarray*}
\mathcal{L}^\times_{\epsilon(f)\epsilon(g)}=\mathcal{L}^\times_g= \{v_0,v_1,v_2,v_0v_1,v_1v_2,v_0v_2\}.  
\end{eqnarray*}
We choose the following $\langle~,~\rangle_g$-orthonormal bases 
\begin{eqnarray*}
\omega_0=\Big\{\frac{v_0}{g(v_0)},\frac{v_1}{g(v_1)},\frac{v_2}{g(v_2)}\Big\}
\end{eqnarray*}
in $C_0(\mathcal{L}^\times_g;\mathbb{R})$   and  
\begin{eqnarray*}
\omega_1=\Big\{\frac{v_0v_1}{g(v_0)g(v_1)},\frac{v_1v_2}{g(v_1)g(v_2)},\frac{v_0v_2}{g(v_0)g(v_2)}\Big\}
\end{eqnarray*}
in $C_1(\mathcal{L}^\times_g;\mathbb{R})$.  Choose also the basis
\begin{eqnarray*}
b_0=\Big\{\frac{f(v_0)v_1-f(v_1)v_0}{g(v_0)g(v_1)},\frac{f(v_1)v_2-f(v_2)v_1}{g(v_1)g(v_2)}\Big\}
\end{eqnarray*} 
in $\partial_1^fC_1(\mathcal{L}^\times_g;\mathbb{R})$.  Then  its lift is 
\begin{eqnarray*}
\tilde b_1 =\Big\{\frac{v_0v_1}{g(v_0)g(v_1)},\frac{v_1v_2}{g(v_1)g(v_2)}\Big\}.  
\end{eqnarray*}  
Taking the orthogonal complement of   $\partial_1^f C_1(\mathcal{L}_g^\times;\mathbb{R})$  in ${\rm Ker}\partial_0^f$  with respect to $\langle~,~\rangle_g$,  we have 
\begin{eqnarray*}
h_0=\Big\{  \frac{1}{\sqrt{\sum_{i=0}^2\frac{f(v_i)^2}{g(v_i)^2}}} \Big(\frac{f(v_0)}{g(v_0)^2}v_0 +\frac{f(v_1)}{g(v_1)^2}v_1       + \frac{f(v_2)}{g(v_2)^2}v_2 \Big) \Big\}.
\end{eqnarray*}
Moreover,  note that since  $C_2(\mathcal{L}^\times_g;\mathbb{R})=0$,   we have  $\partial_2 C_2(\mathcal{L}^\times_g;\mathbb{R})=0$,  which implies  $b_1=0$.   Hence 
\begin{eqnarray*}
h_1=\Big\{\frac{f(v_0){v_1v_2}-f(v_1)v_0v_2+ f(v_2)v_0v_1 }{g(v_0)g(v_1)g(v_2)} 
  \Big\}.  
\end{eqnarray*}
Thus 
\begin{eqnarray*}
[b_0,h_0,\tilde b_0/\omega_0]=\Big|\frac{f(v_1)}{g(v_1)}\Big|\sqrt{\sum_{i=0}^2\frac{f(v_i)^2}{g(v_i)^2}}  
\end{eqnarray*}
and 
\begin{eqnarray*}
[b_1,h_1,\tilde b_1/\omega_1]=\frac{|f(v_1)|}{|g(v_1)|}. 
\end{eqnarray*}
Therefore, 
\begin{eqnarray*}
T(\mathcal{L},f,g)=T(\mathcal{L}^\times_g,f,g)
=\sqrt{\sum_{i=0}^2\frac{f(v_i)^2}{g(v_i)^2}}.  
\end{eqnarray*}
\end{enumerate}
\end{example}

\begin{example}\label{ex-2}
Let $V=\{v_0,v_1,v_2,v_3,v_4\}$.   Suppose both $f$  and $g$   are  non-vanishing functions on $V$. 
\begin{enumerate}[(1).]
\item 
 Let 
\begin{eqnarray*}
\mathcal{K}=\{v_0,v_1,v_2,v_3,v_4,v_0v_1,v_1v_2,v_2v_3,v_3v_4\}. 
\end{eqnarray*}
 Then similar with the calculation in \cite[Example~3.9]{lin1}  and  \cite[Example~4.1]{dig1},   we  have 
\begin{eqnarray*}
T(\mathcal{K},f,g)=\frac{1}{\sqrt{\sum_{i=0}^3\frac{f(v_i)^2}{g(v_i)^2}}} \Big|  \sum_{k=0}^3   \frac{f(v_k)}{g(v_k)}\prod_{0\leq j\leq k-1}  \frac{f(v_{j+1})}{g(v_{j+1})}\prod_{k\leq j\leq 2} \frac{f(v_j)}{g(v_j)}\Big|.
\end{eqnarray*}
\item
Let 
\begin{eqnarray*}
\mathcal{L}=\{0,1,2,3,01,12,23,02,03,13\}. 
\end{eqnarray*}
We choose the following $\langle~,~\rangle_g$-orthonormal bases
\begin{eqnarray*}
\omega_0=\Big\{\frac{v_0}{g(v_0)},\frac{v_1}{g(v_1)},\frac{v_2}{g(v_2)}, \frac{v_3}{g(v_3)}\Big\}
\end{eqnarray*}
in $C_0(\mathcal{L};\mathbb{R})$   and  
\begin{eqnarray*}
\omega_1=\Big\{\frac{v_0v_1}{g(v_0)g(v_1)},\frac{v_1v_2}{g(v_1)g(v_2)},\frac{v_2v_3}{g(v_2)g(v_3)}, \frac{v_0v_2}{g(v_0)g(v_2)}, \frac{v_0v_3}{g(v_0)g(v_3)}, \frac{v_1v_3}{g(v_1)g(v_3)}\Big\}
\end{eqnarray*}
in $C_1(\mathcal{L};\mathbb{R})$. 
Choose also the basis 
\begin{eqnarray*}
b_0=\Big\{\frac{f(v_0)v_1-f(v_1)v_0}{g(v_0)g(v_1)}, \frac{f(v_1)v_2-f(v_2)v_1}{g(v_1)g(v_2)}, \frac{f(v_2)v_3-f(v_3)v_2}{g(v_2)g(v_3)}   \Big\}
\end{eqnarray*}
in $\partial^f_1 C_1(\mathcal{L};\mathbb{R})$  and set its lift 
\begin{eqnarray*}
\tilde b_1= \Big\{\frac{v_0v_{1}}{g(v_0)g(v_{1})}, \frac{v_1v_{2}}{g(v_1)g(v_{2})},  \frac{v_2v_{3}}{g(v_3)g(v_{3})}  \Big\}. 
\end{eqnarray*}
Moreover,  note that $\tilde b_0=0$.   Taking the orthogonal complement of $\partial_1^fC_1(\mathcal{L};\mathbb{R})$  in ${\rm Ker}\partial_0^f= C_0(\mathcal{L};\mathbb{R})$ with respect to $\langle~,~\rangle_g$,  we have 
\begin{eqnarray*}
h_0=\Big\{\frac{\sum_{i=0}^3f(v_i)v_i}{\sqrt{\sum_{i=0}^3 f(v_i)g(v_i)^2}} \Big\}. 
\end{eqnarray*}
Note that   $b_1=0$.  Thus 
\begin{eqnarray*}
h_1=\Big\{  \frac{v_0v_2}{g(v_0)g(v_2)}, \frac{v_0v_3}{g(v_0)g(v_3)},   \frac{v_1v_3}{g(v_1)g(v_3)} \Big\}. 
\end{eqnarray*}
Thus 
\begin{eqnarray*}
[b_0,h_0,\tilde b_0/\omega_0]=\frac{1}{\sqrt{\sum_{i=0}^3\frac{f(v_i)^2}{g(v_i)^2}}} \Big|  \sum_{k=0}^3   \frac{f(v_k)}{g(v_k)}\prod_{0\leq j\leq k-1}  \frac{f(v_{j+1})}{g(v_{j+1})}\prod_{k\leq j\leq 2} \frac{f(v_j)}{g(v_j)}\Big|
\end{eqnarray*}
and 
\begin{eqnarray*}
[b_1,h_1,\tilde b_1/\omega_1]=1. 
\end{eqnarray*}
Therefore, 
\begin{eqnarray*}
T(\mathcal{L},f,g)=\frac{1}{\sqrt{\sum_{i=0}^3\frac{f(v_i)^2}{g(v_i)^2}}} \Big|  \sum_{k=0}^3   \frac{f(v_k)}{g(v_k)}\prod_{0\leq j\leq k-1}  \frac{f(v_{j+1})}{g(v_{j+1})}\prod_{k\leq j\leq 2} \frac{f(v_j)}{g(v_j)}\Big|. 
\end{eqnarray*}

\item
Let 
\begin{eqnarray*}
\mathcal{M}=\{v_0,v_1,v_2,v_3,v_4,v_0v_1,v_0v_2,v_0v_3,v_0v_4\}. 
\end{eqnarray*}
We choose the $\langle~,~\rangle_g$-orthonormal bases 
\begin{eqnarray*}
\omega_0=\Big\{  \frac{v_i}{g(v_i)}  \mid  0\leq i\leq 4 \Big\}
\end{eqnarray*}
in $C_0(\mathcal{L};\mathbb{R})$ and 
 \begin{eqnarray*}
\omega_1=\Big\{  \frac{v_0v_i}{g(v_0)g(v_i)}  \mid  1\leq i\leq 4 \Big\}
\end{eqnarray*}
in $C_1(\mathcal{M};\mathbb{R})$.  Let 
\begin{eqnarray*}
b_0=\Big\{\frac{f(v_0)v_i-f(v_i)v_0}{g(v_0)g(v_i)}\mid 1\leq i\leq 4 \Big\}. 
\end{eqnarray*}
be a basis in $\partial_1^f(C_1(\mathcal{M};\mathbb{R}))$.  The lift of $b_0$ in $C_1(\mathcal{M};\mathbb{R})$  is 
\begin{eqnarray*}
\tilde b_1=\omega_1. 
\end{eqnarray*}
By taking the $\langle~,~\rangle_g$-orthogonal complement of $\partial_1^f(C_1(\mathcal{M};\mathbb{R}))$  in $C_0(\mathcal{L};\mathbb{R})$,  we  have 
\begin{eqnarray*}
h_0= \frac{1}{\sqrt{\sum_{i=0}^4 \frac{f(v_i)^2}{g(v_i)^2}}}\Big(\sum_{i=0}^4  \frac{f(v_i)}{g(v_i)}\frac{v_i}{g(v_i)}\Big). 
\end{eqnarray*}
Moreover,
\begin{eqnarray*}
\tilde b_0=0,~~~  b_1=0, ~~~ h_1=0. 
\end{eqnarray*}
It follows by a calculation similar with \cite[Example~3.9]{lin1}  and \cite[Example~4.1]{dig1}  that 
\begin{eqnarray*}
[b_0,h_0,\tilde b_0/\omega_0]=\frac{f(v_0)^3}{g(v_0)^3}\sqrt{\sum_{i=0}^4 \frac{f(v_i)^2}{g(v_i)^2} }
\end{eqnarray*}
and 
\begin{eqnarray*}
[b_1,h_1,\tilde b_1/\omega_1]=1.  
\end{eqnarray*}
Consequently,  
\begin{eqnarray*}
T(\mathcal{M},f,g)=\frac{f(v_0)^3}{g(v_0)^3}\sqrt{\sum_{i=0}^4 \frac{f(v_i)^2}{g(v_i)^2} }.   
\end{eqnarray*}
\end{enumerate}
\end{example}

\bigskip

{\bf Acknowledgements}. The authors would like to express their deep gratitude to Professor Yong Lin and Professor Jie Wu  for their helpful guidance and kind encouragement.

\bigskip

\bigskip

Shiquan Ren

Address: Yau Mathematical Sciences Center, Tsinghua University, Beijing 100084, China. 

E-mail:  srenmath@126.com

\bigskip

Chengyuan Wu

Address: Institute of High Performance Computing, A*STAR,  Singapore 138632, Singapore. 

E-mail: wuchengyuan@u.nus.edu
 
\end{document}